\theoremstyle{definition}
\newtheorem{example}{Example}
\theoremstyle{plain}
\newtheorem{theorem}{Theorem}
\newtheorem{proposition}{Proposition}
\theoremstyle{remark}
\newtheorem{remark}{Remark}
\theoremstyle{definition}
\def\N{\mathbb{N}}
\def\p{\mathbb{P}}
\def\E{\mathbb{E}}
\def\R{\mathbb{R}}
\DeclareMathOperator{\sign}{sgn}
\def\d{\mathrm{d}}
\renewcommand{\(}{\left(}
\renewcommand{\)}{\right)}
\renewcommand{\[}{\left[}
\renewcommand{\]}{\right]}
\def\id{\mathds{1}}
\title{Sub-uniformity of harmonic mean p-values}
\author{Yuyu Chen\thanks{Department of Economics, University of Melbourne,  Australia. \Letter~{\scriptsize\url{yuyu.chen@unimelb.edu.au}}}
\and  Ruodu Wang\thanks{Department of Statistics and Actuarial Science, University of Waterloo,  Canada. \Letter~{\scriptsize\url{wang@uwaterloo.ca}}}
\and Yuming Wang\thanks{School of Mathematical Sciences, Peking University, China. \Letter~{\scriptsize\url{wangyuming@pku.edu.cn}} 
}
\and Wenhao Zhu\thanks{School of Mathematical Sciences, Peking University, China. 
\Letter~{\scriptsize\url{zwhao92@163.com}}}
}
\date{\today}
\begin{document}
	\maketitle
	\begin{abstract} 
We obtain several inequalities on the generalized means of dependent p-values. In particular, the weighted harmonic mean of p-values is strictly sub-uniform under several dependence assumptions of p-values, including independence, negative upper orthant dependence, the class of extremal mixture copulas, and some Clayton copulas. Sub-uniformity of the harmonic mean of p-values  has an important implication in multiple hypothesis testing: It is statistically invalid (anti-conservative) to merge p-values using the harmonic mean unless a proper threshold or multiplier adjustment is used,
and this applies across all significance levels.
The required multiplier adjustment on the harmonic mean {p-value grows sub-linearly to infinity} as the number of p-values increases, and hence there does not exist a constant multiplier that works for any number of p-values, even under independence.

\textbf{Keywords}: Multiple hypothesis testing; merging function; p-value;  stochastic order; negative dependence; Clayton copula.
	\end{abstract}


\section{Preamble: Connection between merging p-values and risk aggregation}\label{sec:0}

 This paper studies  merging p-values  via averaging in the context of testing a global hypothesis.
 We first explain the connection  between two areas: merging p-values and risk aggregation, before a formal introduction of the research problems and our contributions. 

The combined p-values of many merging methods take the following form $$ M_{g}(U_{1},\dots,U_{n})=g^{-1}\(\frac{1}{n}\sum_{i=1}^{n}g(U_{i})\),$$ where the random variables $U_1,\dots,U_n$ represent possibly dependent p-values and $g:[0,1]\rightarrow[-\infty,\infty]$ is a strictly monotone and continuous function. For instance, the harmonic mean merging method \citep{W19} corresponds to choosing $g$ as the mapping $p\mapsto 1/p$ on $(0,1)$. 
 As $M_{g}(U_{1},\dots,U_{n})$ may not be a valid p-value, i.e., $\p(M_{g}(U_{1},\dots,U_{n})\le p)\le p$ may not hold for some significance level $p\in(0,1)$, it is imperative to determine a valid threshold, denoted by $t_p$, such that $\p(M_{g}(U_{1},\dots,U_{n})\le t_p)\le p$.
 
 If $g$ is   increasing, we can rewrite $\p(M_{g}(U_{1},\dots,U_{n})\le t_p)$ as  $\p(\sum_{i=1}^{n}g(U_{i})\le n  g( t_p))$; the case with   decreasing $g$ is similar. Hence, the problem of deciding 
statistically valid thresholds $t_p$ for the combined p-value 
can be converted into the computation of
\begin{equation} \label{eq:QRM}
\mbox{the quantile  function of the test statistic } g(U_1)+\dots+g(U_n) \mbox{ or   bounds on it}.
\end{equation}   
Although bearing very different motivations, problem \eqref{eq:QRM} has a prominent presence in   Quantitative Risk Management (QRM), known as the problem of risk aggregation, which  concerns   quantitative assessments, such as a quantile or a risk measure, of the sum of random variables  representing financial losses. We refer to \citet[Chapter 8]{MFE15} for an introduction to risk aggregation. 
This connection allows us to utilize many existing results and techniques developed in QRM to study the problem of merging p-values. 
There are three connected streams of research on risk aggregation in QRM that are particularly relevant to merging p-values. 
\begin{enumerate}[(a)]
    \item The first one is Extreme Value Theory (EVT) in risk aggregation, because each $g(U_i)$ in \eqref{eq:QRM} typically has a heavy-tailed, often even infinite-mean, distribution (as in the case of the harmonic mean merging method).  For instance, 
\cite{ELW09} obtained results on the asymptotic behaviour of the quantile of the aggregate risk via EVT,  thus computing \eqref{eq:QRM} in an asymptotic sense, and this helps us to understand the limiting behaviour of the combined p-value as the significance level goes to $0$. We refer to \cite{EKM97} for a comprehensive treatment of EVT in finance and insurance.
The case of our main interest, the harmonic mean of uniform random variables, is discussed in  \citet[Example 7]{embrechts2002correlation} for $n=2$. 
\item The second one is dependence modeling for risk aggregation with copulas. 
Copula models are very popular in the statistical and probabilistic modeling of aggregate risks, and they help to analyze merging p-values in \eqref{eq:QRM} with certain classes of dependence structures. 
We refer to \cite{FV98},  \cite{DGM99}, \cite{CEN06}, and \cite{E09} for using copulas in risk aggregation in various contexts of risk management. The textbook of \cite{MFE15} provides a general treatment. 
\item 
The third one is quantile bounds on dependence uncertainty  in risk aggregation. In the context of merging p-values, this corresponds to the problem of averaging arbitrarily dependent p-values.
For instance,   results in 
\cite{EP06}, \cite{WPY13}, \cite{EWW15}, and \cite{BMWW16} on dependence uncertainty, as well as the algorithm of \cite{EPR13}, are essential to the generalized mean of p-values under arbitrary dependence among p-values. We refer to \cite{EPRWB14} for a summary.
\end{enumerate} 
The above selected references are subjective and may be slightly biased towards the work of Paul Embrechts, who has been a  key figure in all streams of research above,  not only with tremendous research contributions but also with remarkable mentorship for many younger scholars in the field. 

This paper continues to enhance the connection between the two fields by studying merging p-values, especially the harmonic mean of p-values, through recent developments in risk modeling. Some of our results are either directly built on or inspired by the work of Embrechts.

  \section{Introduction}\label{sec:1}

In multiple testing of a single hypothesis and testing multiple hypotheses,
a decision maker often needs to combine several p-values into one p-value. 
 {Recently, \cite{W19} proposed a statistical procedure based on the harmonic mean of p-values, which belongs to the larger class of
 generalized mean p-values studied by \cite{VW20}.} The class of generalized mean p-values also includes Fisher's combination method \citep{F48} via the geometric mean, often applied under the assumption that p-values are independent. 
 The harmonic mean p-value  method of \cite{W19}  has some desirable properties such as being applicable under a wide range of dependence assumptions of p-values, and has received considerable attention in statistics and the natural sciences. 
Validity, admissibility, and threshold adjustments of the generalized mean methods for p-values with arbitrary dependence are studied further by   \cite{VWW21} and \cite{CLTW21}.

We say that a combined p-value, denoted by $P$, is conservative/valid (resp.~anti-conservative/invalid) if $\p(P\le p)\le p$ for all $p\in(0,1)$ (resp.~$\p(P\le p)> p$ for some $p\in (0,1)$); in practice one may only be interested in a specific value of $p$ such as $0.01$ or $0.05$.
Here, $\p$ is a fixed probability under the null hypothesis of interest. 
The  {harmonic mean of p-values, also called the harmonic mean p-value,\footnote{The harmonic mean p-value method usually refers to the statistical procedure proposed by \cite{W19}.}  is known to be anti-conservative under some dependence structures, as noted by \cite{W19}. 
 If the underlying dependence structure is arbitrary, a threshold correction of order $\log n$ is needed, where $n$ is the number of p-values to merge \citep{VW20}. 
This correction generally leads to very conservative tests (i.e., the Type-I error rates are very small), and it may be reduced or even omitted under some specific dependence assumptions.
   In this paper, we study a stochastic order relation between a weighted generalized mean of standard uniform p-values and a standard uniform p-value under several dependence assumptions, and discuss its implications for the validity and threshold adjustment for harmonic mean p-values.

   Let $
\Delta_n =\{(w_1,\dots,w_n)\in [0,1]^n: w_1+\dots+w_n=1\} 
   $ be the unit $n$-simplex.
   We always assume $n\ge 2$.
  For $r\in \R\setminus \{0\}$, $n\in \N$, and $\mathbf w=(w_1,\dots,w_n)\in \Delta_n$,  the (weighted) $r$-mean  function is defined as
$$
M^{\mathbf w}_r(u_1,\dots,u_n) = \left(w_1 u_1^r+\dots+w_n u_n^r \right)^{1/r},~~~~~(u_1,\dots,u_n) \in (0,\infty)^{n}.
$$ 
The $0$-mean function is the weighted geometric mean, that is,
$
M^{\mathbf w}_0(u_1,\dots,u_n) =\prod_{i=1}^n u_i^{w_i},
$
 which is also the limit of $M^{\mathbf w}_r$ as $r\to 0$. The $(-1)$-mean function is referred to as the \emph{harmonic mean}.
 If $w_1=\dots=w_n=1/n$, $M_{r}^{\mathbf w}$ is the symmetric $r$-mean function, denoted by $M_r$ for simplicity, and it  is defined on $\bigcup_{n\in \N}(0,\infty)^n$. Denote by $\Delta^+_n=\Delta_n\cap (0,1)^n$.  For some of our results, we will only consider $\mathbf w\in \Delta^+_n$ since if some components of $\mathbf w$ are zero, we can simply reduce the dimension of $M_{r}^{\mathbf w}$.

Throughout, $U_1,\dots,U_n$ are (standard) uniform random variables on $(0,1)$ that are possibly dependent, and they represent p-values to combine.  The quantity $M^{\mathbf w}_{r} (U_1,\dots,U_n)$ is the weighted $r$-mean of p-values.
For two random variables $X$ and $Y$, we say $X$ is less than $Y$ in stochastic order, denoted by $X\preceq_{\mathrm st} Y$, if $\p(X\le x)\ge \p(Y\le x)$ for all $x\in \R$. Moreover, we write $X\simeq_{\mathrm st} Y$ if $X$ and $Y$ have the same distribution.
 The main results in this paper concern the following inequality under several dependence assumptions of $U_1,\dots,U_n$
\begin{align}\label{eq:maineq}
M^{\mathbf w}_{r} (U_1,\dots,U_n) \preceq_{\mathrm st} U_1,
\end{align}
where $r \le- 1$. Relation  \eqref{eq:maineq} is quite strong as it requires   $\p(M^{\mathbf w}_{r} (U_1,\dots,U_n)\le p)\ge p$ to hold for all threshold levels $p\in(0,1)$. Note that \eqref{eq:maineq} cannot hold for $r >-1 $ except for identical $U_1,\dots,U_n$ (see Proposition \ref{prop:trivial}).

 A non-negative random variable $X$ is said to be  \emph{sub-uniform} if $X\preceq_{\mathrm st} U_1$.  
Moreover, $X$ is \emph{strictly sub-uniform} if 
\begin{align}\label{eq:subunif-3}
\p(X\le p ) >p ~\mbox{ for all $p\in (0,1)$}.
\end{align} 
Using a sub-uniform p-value is anti-conservative in hypothesis testing, since it has a larger type-I error rate than the nominal level. Therefore, if 
\eqref{eq:maineq} holds true, then   merging p-values using the harmonic mean, or any $r$-generalized mean function with $r\le -1$, is anti-conservative across all significance levels  in $(0,1) $.  
\begin{remark}[Terminology]
Although sub-uniformity is  an important property for studying p-values, 
this term   has been used  with different meanings  in the literature. Some of them are collected here. A non-negative random variable $X$ is called  super-uniform by \cite{barber2017p} if $U_1\preceq_{\mathrm st} X$ (anticipating that sub-uniformity should be defined by flipping the above inequality, their terminology is consistent with ours), but such a random variable is called sub-uniform by \cite{ferreira2006benjamini}.  Moreover, \cite{chen2020benjamini} defined sub-uniformity  in the strict sense  \eqref{eq:subunif-3}.  \cite{rubin2019meta} called $X$ sub-uniform if it is dominated by $U_1$ in  convex order.\footnote{A random variable $X$ is said to be dominated by $Y$ in convex order if $\E[f(X)]\le\E[f(Y)]$ holds for all convex functions $f$, provided that the expectations exist.} 
\end{remark}

When mentioning (strict) sub-uniformity later in this paper, we always refer to the corresponding property of $M^{\mathbf w}_{r}(U_1,\dots,U_n)$ or $M_r(U_1,\dots,U_n)$ with $r\le -1$, which will be clear from the context.

The main objective of this paper is to study  \eqref{eq:maineq}  given several dependence assumptions of $U_1,\dots,U_n$, including negative upper orthant dependence (Proposition \ref{th:NLD}), the class of extremal mixture copulas (Theorem \ref{prop:extremal}), and some Clayton copulas (Theorem \ref{th:Clayton}). 
Some of our results are built on a recent study of \cite{CS24}, where a stochastic ordering inequality on heavy-tailed random variables is established. 
As discrete p-values may arise in hypothesis testing (e.g., \cite{VGS05}), we also study  sub-uniformity for discrete uniform random variables on $\{1/m,\dots,m/m\}$ for $m\in\N$. This situation is quite different from the uniform case as \eqref{eq:maineq} can never hold for $\mathbf w\in\Delta_n^+$ in the case of  discrete uniform random variables unless they are identical. However, using the harmonic mean function to merge discrete uniform random variables can still be anti-conservative  at some threshold levels (Theorem \ref{th:discrete}).

Most findings of this paper are 
 negative results: Under many different assumptions of dependence among p-values, the harmonic mean p-value is anti-conservative and cannot be used without a proper adjustment, and the adjustment coefficient diverges even in the case of independence. 
 To address these issues, while keeping the advantages of the harmonic mean p-value, it is recommendable to use the Simes method \citep{S86} or the Cauchy combination method \citep{LX20}, which are shown to be valid under various forms of dependence, and perform comparably to the harmonic mean p-value; see results in \cite{CLTW21} on comparing these three methods. Other methods based on heavy-tailed transformation of p-values can also be used under different assumptions \citep{GJW23}.
 An exception to the above negative results is the case of Clayton copulas treated in Theorem \ref{th:Clayton}, where we obtain a positive result that the harmonic mean p-value can be made valid with a small threshold adjustment (a multiplicative factor of $1.131$) under the assumption of Clayton copulas with parameter at least $1$.

The rest of the paper is organized as follows. In Section \ref{sec:2}, we first discuss the intuition behind \eqref{eq:maineq} in the simplest case, and  then present general properties related to sub-uniformity for dependent $U_1,\dots,U_n$. In Section \ref{sec:3},   \eqref{eq:maineq} is shown given the aforementioned dependence assumptions of $U_1,\dots,U_n$.
In Section \ref{sec:5}, the threshold of the harmonic mean p-value is studied for independent p-values, where we see that the adjustment increases at the rate of $\log n$ for a fixed probability level. Sub-uniformity for discrete uniform random variables is studied in  Section \ref{sec:4}. Numerical examples based on simple simulations are presented in Section \ref{sec:6}, and Section \ref{sec:7} concludes the paper.

 \section{Some observations and general results on sub-uniformity}\label{sec:2}

We set straight some first observations on the sub-uniformity inequality \eqref{eq:maineq}.
First, the generalized mean is monotone in $r$; that is, given any  $\mathbf w\in \Delta_n$, $M^{\mathbf w}_r\le M^{\mathbf w}_s$ for $r\le s  $ (Theorem 16 of \cite{HLP34}). Hence,  we have $M^{\mathbf w}_{ r} (U_1,\dots,U_n)  \le M^{\mathbf w}_{-1} (U_1,\dots,U_n) $ for all $r \le - 1$.
To get the  sub-uniformity inequality \eqref{eq:maineq} for all $r\le -1$, it suffices to show  
  \begin{equation}\label{eq:1} 
\p(M^{\mathbf w}_{-1} (U_1,\dots,U_n) \le p) \ge p \mbox{~for all $p\in (0,1)$}.
\end{equation} 
This observation simplifies our journey by allowing us to focus on the case of the harmonic mean, which also happens to be the most popular method within the class of generalized mean methods for $r\in(-\infty,0)$.

We begin with a simple proof for independent $U_1,\dots,U_n$ in the symmetric case. Although \eqref{eq:1} in this case directly follows from Theorem 1 of \cite{CEW24} or Theorem 1 of \cite{CS24}, the proof below, different from \cite{CEW24} and \cite{CS24}, helps to understand  \eqref{eq:1} via a result well-known in the multiple testing literature, namely the Simes inequality \citep{S86}. For $n\in\N$, write $[n]=\{1,\dots,n\}$. For $(u_1,\dots,u_n) \in (0,\infty)^{n}$, the Simes function is defined as $S(u_1,\dots,u_n)= \min_{i\in[n]}\{ n u_{(i)}/i\}$ where $u_{(1)},\dots,u_{(n)}$ are the order statistics of $u_1,\dots,u_n$, from the smallest to the largest. As shown by \cite{S86},
$S(U_1,\dots,U_n)$ is uniformly distributed on $(0,1)$ given independent
 $U_1,\dots,U_n$. Moreover, we have $M_{-1}\le S$; this inequality is in  Theorem 3 of \cite{CLTW21} and was also discussed in Section 4 of the online appendix of \cite{W19}, but one can also check it directly.
Putting these two observations together, we have   
\begin{equation*}
M_{-1}(U_1,\dots,U_n)\le S(U_1,\dots,U_n)\simeq_{\mathrm st} U_1,
\end{equation*} 
which implies that \eqref{eq:maineq} holds for symmetric mean of independent $U_1,\dots,U_n$. 

 Before showing \eqref{eq:maineq} holds under more general assumptions, we discuss several properties related to the target problem. 
We first explain that it is only meaningful to consider the sub-uniformity \eqref{eq:maineq}  for $r\le - 1$.
Indeed, as illustrated in Proposition \ref{prop:trivial} below, 
 sub-uniformity can hold for some $r>-1$ only in the trivial case that $U_1,\dots,U_n$ are identical, and thus strict sub-uniformity can never hold  for any $r>-1$. 
\begin{proposition}\label{prop:trivial}
The following statements are equivalent. 
\begin{enumerate}[(i)]
\item $M^{\mathbf w}_{r} (U_1,\dots,U_n) \preceq_{\mathrm st} U_1 $ for some  $r\in (-1,\infty)$ and   $\mathbf w\in \Delta_n^+$;
\item $M^{\mathbf w}_{r} (U_1,\dots,U_n) \simeq_{\mathrm st} U_1 $ for some   $r\in (-1,\infty)$ and   $\mathbf w\in \Delta_n^+$;
\item $U_1=\dots=U_n$ a.s.;
\item $ 
M^{\mathbf w}_{ r} (U_1,\dots,U_n) = U_1 $ a.s.~for all $r \in \R$ and all  $\mathbf w\in \Delta_n$;
\item $ 
M^{\mathbf w}_{r} (U_1,\dots,U_n) = U_1 $ a.s.~for some $r\in (-1,\infty)$ and   $\mathbf w\in \Delta_n^+$.
\end{enumerate} 
\end{proposition}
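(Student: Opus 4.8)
The plan is to establish the cyclic chain of implications $(\mathrm i)\Rightarrow(\mathrm{iii})\Rightarrow(\mathrm{iv})\Rightarrow(\mathrm v)\Rightarrow(\mathrm{ii})\Rightarrow(\mathrm i)$, in which every link but the first is essentially immediate. Indeed, if $U_1=\dots=U_n$ a.s.\ then a weighted mean of identical arguments returns that common value, so $M^{\mathbf w}_r(U_1,\dots,U_n)=U_1$ a.s.\ for every $r$ and every $\mathbf w\in\Delta_n$, giving $(\mathrm{iii})\Rightarrow(\mathrm{iv})$; specializing to one admissible pair $(r,\mathbf w)$ yields $(\mathrm{iv})\Rightarrow(\mathrm v)$; almost-sure equality gives equality in distribution, hence $(\mathrm v)\Rightarrow(\mathrm{ii})$; and equality in distribution gives $\preceq_{\mathrm{st}}$, hence $(\mathrm{ii})\Rightarrow(\mathrm i)$. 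All the content therefore sits in $(\mathrm i)\Rightarrow(\mathrm{iii})$.

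For this implication I would proceed in two stages, first upgrading $\preceq_{\mathrm{st}}$ to equality in distribution and then upgrading that to almost-sure equality. The first stage rests on the dependence-free identity
\begin{equation*}
\E\big[M^{\mathbf w}_r(U_1,\dots,U_n)^r\big]=\sum_{i=1}^n w_i\,\E[U_i^r]=\frac1{r+1}=\E[U_1^r],\qquad r>-1,\ r\neq0,
\end{equation*}
which holds because $M^{\mathbf w}_r(\cdot)^r$ is exactly the linear combination $\sum_i w_i U_i^r$ and each $U_i$ is uniform; for $r=0$ the analogous identity $\E[\log M^{\mathbf w}_0]=\sum_i w_i\E[\log U_i]=-1=\E[\log U_1]$ is used. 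Applying the increasing map $u\mapsto u^r$ (for $r>0$), the decreasing map $u\mapsto u^r$ (for $-1<r<0$), or $u\mapsto\log u$ (for $r=0$) to the hypothesis $M^{\mathbf w}_r(U_1,\dots,U_n)\preceq_{\mathrm{st}}U_1$ turns it into a one-sided stochastic-order relation between two integrable random variables with identical means. The elementary fact that $A\preceq_{\mathrm{st}}B$ together with $\E[A]=\E[B]<\infty$ forces $A\simeq_{\mathrm{st}}B$ (since $\E[B]-\E[A]=\int(F_A-F_B)$ with a nonnegative integrand) then yields $M^{\mathbf w}_r(U_1,\dots,U_n)\simeq_{\mathrm{st}}U_1$.

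In the second stage I would exploit that $M^{\mathbf w}_r(U_1,\dots,U_n)$ is now uniform, so $\E[M^{\mathbf w}_r(U_1,\dots,U_n)]=1/2=\E[\sum_i w_iU_i]$. For $r\neq1$ the strict power-mean inequality (the equality case of the monotonicity in $r$ quoted from \cite{HLP34}, valid because $\mathbf w\in\Delta_n^+$) gives $M^{\mathbf w}_r(U_1,\dots,U_n)\le\sum_i w_iU_i$ when $r<1$ and $\ge$ when $r>1$, pointwise, with equality at a sample point exactly when $U_1=\dots=U_n$ there. A random variable of constant sign with zero mean vanishes a.s., so the two means coinciding forces $U_1=\dots=U_n$ a.s. The borderline case $r=1$, where $M^{\mathbf w}_1=\sum_i w_iU_i$ and the power-mean comparison degenerates, I would settle separately via second moments: $\var(\sum_i w_iU_i)=1/12$ combined with $\cov(U_i,U_j)\le\var(U_1)=1/12$ (Cauchy--Schwarz, with equality only when $U_i=U_j$ a.s.\ since the marginals coincide) again forces $U_1=\dots=U_n$ a.s.

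The main obstacle is the first stage: the hypothesis only provides a one-directional inequality between distribution functions, and the whole argument hinges on producing a matching equality of expectations to collapse it. The device that makes this work---and that also explains why the statement is confined to $r>-1$---is the observation that raising the $r$-mean to the power $r$ linearizes it, so its $r$-th moment is pinned down by the marginals alone and is insensitive to the dependence structure; the finiteness $\E[U_i^r]=1/(r+1)<\infty$, which fails at $r=-1$, is precisely what the mean-matching step needs. Care must also be taken with the direction of the induced stochastic order (the map $u\mapsto u^r$ reverses it for $r<0$) and with isolating the degenerate exponent $r=1$.
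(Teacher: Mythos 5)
Your proof is correct, and while the first half coincides with the paper's argument, the second half takes a genuinely different route. The reduction of $\preceq_{\mathrm st}$ to equality in distribution via the dependence-free moment identity $\E[\sum_i w_iU_i^r]=1/(r+1)=\E[U_1^r]$ (and its logarithmic analogue at $r=0$) is exactly the paper's step (i)$\Rightarrow$(ii), including the observation that finiteness of this moment is what singles out $r>-1$. Where you diverge is in upgrading distributional equality to $U_1=\dots=U_n$ a.s.: the paper introduces an auxiliary exponent $\epsilon\in(0,1+r)$, computes $\E[U_i^rU^{-\epsilon}]$ with $U=M_r^{\mathbf w}(U_1,\dots,U_n)$, and invokes Hoeffding's covariance identity together with the Fr\'echet--Hoeffding bound to force comonotonicity of $U_i^r$ and $U^{-\epsilon}$, whence $U_i=U$ a.s.; you instead compare $M_r^{\mathbf w}$ pointwise with the arithmetic mean $M_1^{\mathbf w}$ via the strict power-mean inequality, note that both have expectation $1/2$ once $M_r^{\mathbf w}$ is known to be uniform, and conclude from the a.s.\ vanishing of a signed zero-mean difference, with the degenerate exponent $r=1$ handled by a separate variance/Cauchy--Schwarz argument. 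Your route is more elementary (no Hoeffding lemma, no auxiliary $\epsilon$) and treats all $r\in(-1,\infty)$ explicitly rather than deferring $r\ge 0$ to a ``similar argument,'' at the mild cost of the extra case split at $r=1$; both arguments use $\mathbf w\in\Delta_n^+$ in an essential way at the equality-case step. The routine implications (iii)$\Rightarrow$(iv)$\Rightarrow$(v) and the closing of the cycle are the same in both write-ups, up to the harmless reordering of (ii) within your chain.
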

\begin{proof}
Note that the binary relation $X \preceq_{\mathrm st} Y$ is flipped under decreasing transformation on both $X$ and $Y$. Hence, for $r <0$, we can write $M^{\mathbf w}_{r} (U_1,\dots,U_n) \preceq_{\mathrm st} U_1 $ as 
\begin{align*}
  \sum_{i=1}^n w_iU_i^{r}  \succeq_{\mathrm st}    U_1^{r}.
\end{align*}  The case $r\ge  0$ can be argued similarly, and we will omit it from the discussion below.

 \underline{(i)$\Rightarrow$(ii)}:
If $ \sum_{i=1}^n w_i U_i^{r} $  and  $U_1^{r}$ are not identically distributed, then 
$$
 \E\left[  \sum_{i=1}^n w_i U_i^{r} \right]  > \E\left[   U_1^{r}\right],
$$
which leads to a contradiction, since both expectations are equal to $1/(r+1)$. 
Hence, we have $  \sum_{i=1}^n w_i U_i^{r} \simeq_{\mathrm st} U_1^{r}$.
   
  \underline{(ii)$\Rightarrow$(iii)}: 
Let $U=M_{r}^{\mathbf w}(U_1,\dots,U_n)$, which is uniformly distributed on $[0,1]$.
Note that 
\begin{align}\label{eq:pr1-pf0}
  \sum_{i=1}^n w_i U_i^{r}  =   U^{r}.
\end{align}
Take $\epsilon \in (0,1+r)$. From \eqref{eq:pr1-pf0}, we have 
\begin{align}\label{eq:pr1-pf1}  \E\left[  \sum_{i=1}^n w_iU_i^{r} U^{-\epsilon} \right]   =  \E\left[   U^{r  -\epsilon }\right] = \frac{1}{1+r-\epsilon}.\end{align}
For all $i\in[n]$,
 by Lemma 7.27 of \cite{MFE15} and the Fr\'echet-Hoeffding inequality, we have  
 $ \E [ U_i^{r} U^{-\epsilon} ]\le  \E[   U^{r-\epsilon}]$, and the equality holds if and only if $U_i^{r}$ and $U^{-\epsilon}$ are comonotonic.
Together with \eqref{eq:pr1-pf1}, we obtain $ \E [ U_i^{r} U^{-\epsilon} ] =  \E[ U^{r-\epsilon}]$ and thus $U_i^{r}$ and $U^{-\epsilon}$ are comonotonic.
 Moreover, as $U_i$ and $U$ are identically distributed and $r<0$, $U_i=U$ a.s.~for all $i\in[n]$.

The remaining implications, \underline{(iii)$\Rightarrow$(iv) $\Rightarrow$(v) $\Rightarrow$(i)},
are straightforward by definition.  
\end{proof}

 Joint distributions of standard uniform random variables are known as copulas; see  \cite{N06} for an introduction to copulas. Next, we explain how to construct copulas for which \eqref{eq:maineq} holds. We write $X\overset{\mathrm{d}}{\sim}F$ if a random variable or random vector $X$ is distributed as $F$. Fix $r\le -1$.
In what follows, 
we say sub-uniformity holds for a copula $C$ (or a random vector $(U_1,\dots,U_n)$), if $M^{\mathbf w}_{r} (U_1,\dots,U_n) \preceq_{\mathrm st} U_1$ holds for $(U_1,\dots,U_n)\overset{\mathrm{d}}{\sim} C$ and  all $\mathbf w\in\Delta_n$; 
by saying that  strict sub-uniformity holds for a copula $C$ (or a random vector $(U_1,\dots,U_n)$), we mean that 
 \begin{equation*}
\p(M^{\mathbf w}_{r} (U_1,\dots,U_n) \le p) > p \mbox{~for all $p\in (0,1)$},
\end{equation*}
 holds for $(U_1,\dots,U_n)\overset{\mathrm{d}}{\sim} C$ and  all $\mathbf w\in\Delta_n^+$. 
Let $\mathcal C_k$ be the set of all $k$-dimensional copulas, $k\in \N$.

\begin{proposition}\label{prop:trivial2}
Let $r\le -1$ and $\mathcal C \subseteq \mathcal C_n$. If  sub-uniformity holds for   each copula in $\mathcal C$, then it holds for any copula in the convex hull of $\mathcal C$. 
\end{proposition}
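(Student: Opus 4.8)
The plan is to exploit the fact that, for any fixed weight vector $\mathbf w\in\Delta_n$ and any fixed level $p\in(0,1)$, the map $C\mapsto \p(M^{\mathbf w}_{r}(U_1,\dots,U_n)\le p)$ is affine in $C$. Since sub-uniformity is nothing but a family of inequalities of the form $\p(\,\cdot\le p)\ge p$ indexed by $\mathbf w$ and $p$, and affine functions preserve such one-sided inequalities under convex combinations, the property should pass to the convex hull essentially for free.

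First I would take an arbitrary copula $C$ in the convex hull of $\mathcal C$ and write it as $C=\sum_{j=1}^m\lambda_j C_j$ with $C_1,\dots,C_m\in\mathcal C$, $\lambda_1,\dots,\lambda_m\ge 0$, and $\sum_{j=1}^m\lambda_j=1$. Before anything else I would check that $C$ is itself a copula, i.e., that its one-dimensional margins are standard uniform; this is immediate because each $C_j$ has uniform margins and the margin of a mixture is the corresponding mixture of margins. This guarantees that the $U_1$ on the right-hand side of \eqref{eq:maineq} is genuinely a standard uniform random variable, so that the statement to be proved is well-posed.

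Next I would give the mixture a probabilistic reading: let $J$ be an auxiliary random variable with $\p(J=j)=\lambda_j$, and conditionally on $\{J=j\}$ let $(U_1,\dots,U_n)\overset{\mathrm{d}}{\sim}C_j$; then unconditionally $(U_1,\dots,U_n)\overset{\mathrm{d}}{\sim}C$. Conditioning on $J$ and using the law of total probability gives, for every $\mathbf w\in\Delta_n$ and every $p\in(0,1)$,
\begin{equation*}
\p\bigl(M^{\mathbf w}_{r}(U_1,\dots,U_n)\le p\bigr)=\sum_{j=1}^m\lambda_j\,\p\bigl(M^{\mathbf w}_{r}(U_1,\dots,U_n)\le p\,\big|\,J=j\bigr),
\end{equation*}
where each conditional probability is exactly the probability computed under the copula $C_j$.

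Finally I would invoke the hypothesis: since sub-uniformity holds for each $C_j$, we have $\p(M^{\mathbf w}_{r}(U_1,\dots,U_n)\le p\mid J=j)\ge p$ for all $j$, all $\mathbf w\in\Delta_n$, and all $p\in(0,1)$. Substituting into the displayed identity and using $\sum_{j=1}^m\lambda_j=1$ yields $\p(M^{\mathbf w}_{r}(U_1,\dots,U_n)\le p)\ge\sum_{j=1}^m\lambda_j p=p$, which is precisely sub-uniformity for $C$. There is no genuine obstacle here beyond the bookkeeping; the only points deserving care are the well-posedness check that the mixture remains a copula, and, if one wants the conclusion for infinite convex combinations rather than finite ones, replacing the finite sum by an integral against a mixing probability measure on $\mathcal C$ and invoking Tonelli's theorem to justify interchanging the integral with the probability.
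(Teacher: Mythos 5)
Your proof is correct and is essentially the paper's own argument: the paper simply observes that $\p(M^{\mathbf w}_{r}(U_1,\dots,U_n)\le p)$ is linear in the distribution of $(U_1,\dots,U_n)$, so the family of inequalities defining sub-uniformity passes to convex combinations. Your mixture-variable $J$ and the check that the mixture is again a copula are just a more explicit rendering of that same linearity step.
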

\begin{proof}
Note that for all $p\in(0,1)$ and any $\mathbf w\in\Delta_n$, $\p(M^{\mathbf w}_{r} (U_1,\dots,U_n) \le p)$ is linear in the distribution of $(U_1,\dots,U_n)$. Since  sub-uniformity holds for every element in $\mathcal C$, it also holds for every element from the convex hull of $\mathcal C$.
\end{proof}

The following proposition shows that sub-uniformity can be passed from  smaller groups to a larger group 
of p-values in two different ways.
In what follows, 
for vectors $\mathbf x=(x_1,\dots,x_n)\in\R^n$ and $\mathbf y=(y_1,\dots,y_n)\in\R^n$, their dot product is $\mathbf x\cdot \mathbf y=\sum_{i=1}^nx_iy_i$, and we denote by $\|\mathbf x\|=\sum_{i=1}^n|x_i|$ and $\mathbf x^{-1}=(x_1^{-1},\cdots,x_n^{-1})$.
Moreover, $\bigwedge \mathbf x =\min\{x_1,\dots,x_n\}$.
\begin{proposition}\label{prop:trivial3}
Let $r\le -1$, and $C_i \in \mathcal C_{k_i}$ with   $k_i\in \N$ and $i\in[n]$. 
\begin{enumerate}[(i)]
\item 
 If  sub-uniformity holds for $C_i$ for all $i\in[n]$, then it holds for 
 $C(\mathbf u_1,\dots,\mathbf u_n)=\prod_{i=1}^n C_i(\mathbf u_i)$, $\mathbf u_i\in \R^{k_i}$, $i\in[n]$.
 \item
 Suppose that $C_i(\mathbf u_i)=\bigwedge  \mathbf u_i$, $i\in[n]$. If  sub-uniformity holds for $C^{*}\in\mathcal C_{n}$, then it holds for $C(\mathbf u_1,\dots,\mathbf u_n)=C^{*}(C_1(\mathbf u_1),\dots, C_n(\mathbf u_n))$,  $\mathbf u_i\in \R^{k_i}$, $i\in[n]$.
\end{enumerate}
\end{proposition}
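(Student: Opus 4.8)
The plan is to fix $r\le -1$ and reduce both statements to the sub-uniformity of the \emph{outer} structure by collapsing each block to a single scalar summary. Index the $K=\sum_{i=1}^n k_i$ coordinates as $U_{i,1},\dots,U_{i,k_i}$ with the $i$-th block governed by $C_i$, fix a weight vector with entries $w_{i,j}\ge 0$ summing to $1$, and set the block totals $a_i=\sum_{j=1}^{k_i}w_{i,j}$, so that $\mathbf a=(a_1,\dots,a_n)\in\Delta_n$. Vanishing $a_i$ simply drop a block, so I may assume $a_i>0$ and put $\tilde{\mathbf w}_i=(w_{i,1}/a_i,\dots,w_{i,k_i}/a_i)\in\Delta_{k_i}$. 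Because $r<0$ and the $r$-mean is built from a power sum, the exact identity
\begin{equation*}
\Big(M^{\mathbf w}_{r}\big((U_{i,j})\big)\Big)^r=\sum_{i,j}w_{i,j}U_{i,j}^r=\sum_{i=1}^n a_i\sum_{j=1}^{k_i}\tilde w_{i,j}U_{i,j}^r=\sum_{i=1}^n a_i Y_i^r
\end{equation*}
holds, where $Y_i:=M^{\tilde{\mathbf w}_i}_{r}(U_{i,1},\dots,U_{i,k_i})$; hence $M^{\mathbf w}_{r}((U_{i,j}))=M^{\mathbf a}_{r}(Y_1,\dots,Y_n)$ exactly. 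This turns a $K$-dimensional problem into an $n$-dimensional one whose arguments are the block summaries $Y_i$.

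For part (ii), $C_i=\bigwedge$ is the comonotone copula, so the uniforms within block $i$ coincide, $U_{i,1}=\dots=U_{i,k_i}=:Z_i$ a.s., whence $Y_i=Z_i$ and $(Z_1,\dots,Z_n)\overset{\mathrm{d}}{\sim}C^{*}$. The identity then reads $M^{\mathbf w}_{r}((U_{i,j}))=M^{\mathbf a}_{r}(Z_1,\dots,Z_n)$, and since sub-uniformity holds for $C^{*}$ for every weight in $\Delta_n$, in particular for $\mathbf a$, I conclude $M^{\mathbf w}_{r}((U_{i,j}))\preceq_{\mathrm st}U_1$ directly. This case is essentially immediate.

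For part (i) the blocks are independent, so $Y_1,\dots,Y_n$ are independent, and applying the hypothesis that sub-uniformity holds for each $C_i$ with the weight $\tilde{\mathbf w}_i$ gives $Y_i\preceq_{\mathrm st}U_1$ for every $i$. I would then pass from these sub-uniform summaries to genuine uniforms by a coordinatewise monotone coupling: pick independent uniforms $V_1,\dots,V_n$ with $Y_i\le V_i$ a.s.\ (the quantile coupling, legitimate since $Y_i\preceq_{\mathrm st}V_i$ and feasible block-by-block because the $Y_i$ are independent), and use that $M^{\mathbf a}_{r}$ is nondecreasing in each argument to obtain $M^{\mathbf a}_{r}(Y_1,\dots,Y_n)\preceq_{\mathrm st}M^{\mathbf a}_{r}(V_1,\dots,V_n)$; this is just the standard closure of $\preceq_{\mathrm st}$ under increasing maps of independent coordinates. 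Finally $M^{\mathbf a}_{r}(V_1,\dots,V_n)\preceq_{\mathrm st}U_1$ is sub-uniformity for the independence copula with weight $\mathbf a$, which is the independence instance of the sub-uniformity results for negatively dependent p-values (Proposition~\ref{th:NLD}); chaining the two inequalities by transitivity of $\preceq_{\mathrm st}$ closes part (i).

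The hard part is that part (i) is genuinely not self-contained: taking all $k_i=1$ collapses the hypothesis to the trivial sub-uniformity of one-dimensional copulas, while the conclusion becomes the weighted independent case itself, so any correct argument must \emph{import} weighted independent sub-uniformity as an external input rather than derive it. The only substantive technical point beyond the bookkeeping of block weights is therefore the monotone coupling step, which replaces the merely sub-uniform $Y_i$ by true uniforms while preserving independence; the rest is the exact identity above together with transitivity of the stochastic order.
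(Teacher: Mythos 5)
Your proof is correct and follows essentially the same route as the paper's: both arguments collapse each block into a scalar summary, use preservation of $\preceq_{\mathrm st}$ under independent monotone transformations (your quantile coupling is the paper's appeal to closure under convolution via Theorem 1.A.3 of Shaked and Shanthikumar, just phrased on the mean scale rather than the reciprocal-sum scale), and import the weighted independent-uniform base case as an external input (you via the independence instance of Proposition \ref{th:NLD}, the paper via Theorem 1 of Chen, Embrechts and Wang). Part (ii) is handled identically in both.
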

\begin{proof}
Suppose that $(\mathbf U_1,\dots,\mathbf U_n)\overset{\mathrm{d}}{\sim} C$ such that $\mathbf U_i\overset{\mathrm{d}}{\sim} C_i$, $i\in[n]$.
\begin{enumerate}[(i)]
\item  Let $\mathbf w_i\in [0,1]^{k_i}$, $i\in[n]$, such that $(\mathbf w_1,\dots,\mathbf w_n)\in \Delta_{\sum_{i=1}^nk_i}$. Let $U,V_1,\dots, V_n$ be iid standard uniform random variables. Since  sub-uniformity holds for $C_i$,  we have $\mathbf w_i\cdot\mathbf U_i^{-1} \succeq_{\mathrm st} \|\mathbf w_i\| V_i^{-1}$ for all $i\in[n]$. As $C(\mathbf u_1,\dots,\mathbf u_n)=\prod_{i=1}^n C_i(\mathbf u_i)$, $\mathbf U_1,\dots, \mathbf U_n$ are independent of each other. As stochastic order is preserved under convolution (e.g., Theorem 1.A.3 in \cite{SS07}), $\sum_{i=1}^n\mathbf w_i\cdot\mathbf U_i^{-1} \succeq_{\mathrm st}\sum_{i=1}^n \|\mathbf w_i\| V_i^{-1}$. Moreover, by Theorem 1 in \cite{CEW24}, $\sum_{i=1}^n \|\mathbf w_i\| V_i^{-1}\succeq_{\mathrm st}U^{-1}$. Consequently, we have the desired result as follows
$$\sum_{i=1}^n\mathbf w_i\cdot\mathbf U_i^{-1} \succeq_{\mathrm st}\sum_{i=1}^n \|\mathbf w_i\| V_i^{-1}\succeq_{\mathrm st}U^{-1}.$$
\item Since $C_i(\mathbf u_i)=\bigwedge \mathbf u_i$, the components of $\mathbf U_i$, $i\in[n]$, are perfectly positively dependent (i.e., they are almost surely equal as they follow the same distribution). The desired result is obvious as  sub-uniformity holds for $C^{*}$.\qedhere
\end{enumerate}
\end{proof}

By Proposition \ref{prop:trivial3} (i),  if sub-uniformity holds for independent subgroups of standard uniform random variables, it also holds for the whole group. Proposition \ref{prop:trivial3} (ii) says that,  for a group of standard uniform random variables that consists of $n$ subgroups of perfectly positively dependent components, if  sub-uniformity holds for $n$ components each of which comes from one distinct subgroup, then it also holds for the whole group.

In the rest of this paper, we will show that sub-uniformity holds for several dependence models including independence and some forms of positive and negative dependence.
Together with Propositions \ref{prop:trivial2} and 
\ref{prop:trivial3}, by mixing these dependence models or connecting them via groups, we can obtain a much wider range of dependence models for which sub-uniformity holds.

%
  

   \section{Sub-uniformity for dependent p-values}\label{sec:3}
 
  In this section, we study  sub-uniformity for standard uniform random variables that are negatively or positively dependent in specific forms. 
  In particular, 
  we show that sub-uniformity holds for negative upper orthant dependence, extremal mixture copulas, and some Clayton copulas. 
  
   \subsection{Negative upper orthant dependence}
   A random vector $\mathbf X= (X_1,\dots,X_n)$  is \emph{negatively upper orthant dependent} (NUOD) if for all $x_1,\dots,x_n\in \R$, 
 it holds that 
\begin{equation}\label{eq:NUOD}
\p(X_1> x_1,\dots,X_n> x_n)\le \prod_{i=1}^n\p(X_i> x_i).
\end{equation}
 It is said to be \emph{negatively lower orthant dependent} (NLOD) if for all $x_1,\dots,x_n\in \R$, 
\begin{equation}\label{eq:NLOD}
 \p(X_1\le x_1,\dots,X_n\le x_n)\le \prod_{i=1}^n\p(X_i\le x_i).
 \end{equation}
 In general, the two notions of negative dependence are not equivalent except when $n=2$. If $\mathbf X$ is both NUOD and NLOD, it is said to be \emph{negatively orthant dependent}, as introduced by \cite{block1982some}. Negative orthant dependence includes multivariate normal distributions with non-positive correlations as special cases; see, e.g., \cite{chi2022multiple} for examples of negative dependence in multiple hypothesis testing.

 Intuitively, \eqref{eq:NUOD} (resp.~\eqref{eq:NLOD}) means that that $X_1,\dots,X_n$ are less likely to be large (resp.~small) simultaneously compared to $X_1',\dots,X_n'$, which are iid copies of $X_1,\dots,X_n$, hence is a notion of negative dependence. Negative orthant dependence is closely related to other notions of negative dependence. It is implied by 
negative association \citep{JP83},  negative regression dependence \citep{block1985concept}, and weak negative association \citep{CEW24}; see, e.g., \cite{CEW24} for a discussion of these implication relations.

The next result gives sub-uniformity for NUOD standard uniform random variables.  It is based on Theorem 1 of  \cite{CS24}, where a stochastic order relation for heavy-tailed random variables is shown. As such, this result is essentially known in a different context, but we present it here for its concise statement and clear interpretation.

 \begin{proposition}\label{th:NLD}
 Let $r \le-1$. If $(U_1,\dots,U_n)$ is NUOD, then $M_{r}^{\mathbf w}(U_1,\dots,U_n)$ is strictly sub-uniform for all  $\mathbf w\in \Delta^+_n$.
\end{proposition}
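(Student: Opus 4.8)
The plan is to reduce to the harmonic mean, recast sub-uniformity as a strict tail-dominance statement for a weighted sum of unit Pareto variables, and then invoke Theorem~1 of \cite{CS24}. Fix $\mathbf w\in\Delta_n^+$ and $r\le-1$. By the monotonicity of generalized means in $r$ (Theorem~16 of \cite{HLP34}), $M^{\mathbf w}_r(U_1,\dots,U_n)\le M^{\mathbf w}_{-1}(U_1,\dots,U_n)$ almost surely, so $\p(M^{\mathbf w}_r\le p)\ge\p(M^{\mathbf w}_{-1}\le p)$ for every $p$; strict sub-uniformity of the harmonic mean therefore passes to all $r\le-1$, and I only need to treat $r=-1$, as already noted around \eqref{eq:1}.

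Setting $X_i=U_i^{-1}$, I would write $M^{\mathbf w}_{-1}(U_1,\dots,U_n)=\(\sum_{i=1}^n w_iX_i\)^{-1}$, so that for $p\in(0,1)$
\begin{equation*}
\p\(M^{\mathbf w}_{-1}(U_1,\dots,U_n)\le p\)=\p\(\sum_{i=1}^n w_iX_i\ge 1/p\).
\end{equation*}
Since $X_1$ is unit Pareto with $\p(X_1\ge t)=1/t$ for $t\ge1$, the desired $\p(M^{\mathbf w}_{-1}\le p)>p$ for all $p\in(0,1)$ is precisely the strict tail dominance $\p(\sum_{i=1}^n w_iX_i\ge t)>\p(X_1\ge t)$ for all $t>1$; this is the same reformulation as $\sum_{i=1}^n w_iU_i^{r}\succeq_{\mathrm{st}}U_1^{r}$ used in the proof of Proposition~\ref{prop:trivial}.

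It then remains to verify the hypotheses of \cite{CS24}. Each $X_i=1/U_i$ is unit Pareto, hence heavy-tailed with infinite mean (tail index $1$), the boundary regime in which the dominance phenomenon lives. Moreover, because $u\mapsto 1/u$ is decreasing and the margins are continuous, NUOD of $(U_1,\dots,U_n)$ transfers to the lower orthant of $(X_1,\dots,X_n)$:
\begin{equation*}
\p(X_1\le x_1,\dots,X_n\le x_n)=\p(U_1>1/x_1,\dots,U_n>1/x_n)\le\prod_{i=1}^n\p(X_i\le x_i),
\end{equation*}
so $(X_1,\dots,X_n)$ is NLOD. With identically distributed unit-Pareto margins, NLOD summands, and strictly positive weights, Theorem~1 of \cite{CS24} delivers exactly the strict tail dominance above, which completes the argument.

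I expect the genuine mathematical content to be concentrated entirely in the appeal to \cite{CS24}. A naive lower bound on $\p(\sum_i w_iX_i\ge t)$ via the single-large-term events $\{w_iX_i\ge t\}$ together with a Bonferroni estimate only yields $1/t$ up to a negative second-order correction, so it fails to recover even the non-strict dominance; the strict inequality is produced by the convexity of the Pareto survival function together with the accumulation of moderate summands, which is what \cite{CS24} controls. The one point requiring care is the dependence bookkeeping: I must make sure the negative dependence enters \cite{CS24} through the lower orthant of the $X_i$ (equivalently NUOD of the $U_i$) and that their conclusion is strict for every $\mathbf w\in\Delta_n^+$ rather than only weak.
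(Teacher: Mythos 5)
Your argument is correct and follows essentially the same route as the paper's proof: reduce to the harmonic mean via monotonicity of $r$-means, observe that NUOD of $(U_1,\dots,U_n)$ yields NLOD of the reciprocals, and invoke Theorem~1 of \cite{CS24} for the strict tail dominance of the weighted sum. The only cosmetic difference is that the paper applies \cite{CS24} to the shifted variables $U_i^{-1}-1$ (Pareto with survival function $1/(x+1)$ on $(0,\infty)$), which is equivalent to your formulation with $U_i^{-1}$ since the weights sum to one.
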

\begin{proof}
   It suffices to show $$
  \p\(\sum_{i=1}^n w_iU_i^{-1}>t\)>\frac{1}{t}\mbox{~~holds for all $t>1$ and all $\mathbf w\in\Delta_n^+$.}$$ 
  Note that $U_i^{-1}-1$, $i\in[n]$, follows a Pareto distribution with distribution function $\p(U_i^{-1}-1\le x)=1-1/(x+1)$, $x> 0$. As $(U_1,\dots,U_n)$ is NUOD, $(U_1^{-1},\dots,U_n^{-1})$ is NLOD. By Theorem 1 of \cite{CS24}, 
  $$ \p\(\sum_{i=1}^n w_i(U_i^{-1}-1)>x\)>\p\(U_1^{-1}-1>x\)=\frac{1}{x+1}\mbox{~~holds for all $x>0$ and all $\mathbf w\in\Delta_n^+$.}$$
  Hence, we have the desired result.
\end{proof}
Certainly, the result in Proposition \ref{th:NLD} also holds for 
 $(U_1,\dots,U_n)$ satisfying any of the stronger notions of negative dependence than negative upper orthant dependence. 

 \subsection{Extremal mixture copulas}
 Next, we apply Proposition \ref{th:NLD} to show that sub-uniformity holds for extremal mixture copula \citep{mcneil2020attainability}. We say that $(U_1,\dots,U_n)$ follows an \emph{extremal copula} $C\in\mathcal C_{n}$ with an index set $ J\subseteq [n]$, if
$$U_{j}\overset{\mathrm d}{=}\id_{\{j\in J\}}U+\id_{\{j\in J^{c}\}}(1-U),\mbox{~~for~~} j\in[n],$$ where $U$ is a standard uniform random variable. For $n\geq 2$, there are $2^{n-1}$ different extremal copulas. Let $\mathbf s_{i}$ be a vector consisting of the digits of a $n$-digit binary number which represents the decimal number $i-1$, for each $i\in [2^{n-1}]$. For instance, if $n=3$, we have
$$\mathbf s_{1}=(0,0,0),\mbox{~~}\mathbf s_{2}=(0,0,1),\mbox{~~}\mathbf s_{3}=(0,1,0),\mbox{~~}\mathbf s_{4}=(0,1,1).$$
 For $n\in\N$, let $J_i$ be the index set of zeros in $\mathbf s_{i}$, for each $i\in[2^{n-1}]$. Denote by $C^{(i)}$ the extremal copula with index set $J_i$. Note that $C^{(1)}$ is the comonotonicity copula.    A copula $C$ is an \emph{extremal mixture copula} with a vector $(a_1,\dots,a_{2^{n-1}}) \in\Delta_{2^{n-1}}$ if $C=\sum_{i=1}^{2^{n-1}}a_{i}C^{(i)}$. A random vector following an extremal mixture copula is not necessarily NUOD.

\begin{theorem}\label{prop:extremal}
Let $r \le-1$. If $(U_1,\dots,U_n)$ follows an extremal mixture copula with $(a_1,\dots,a_{2^{n-1}})\in \Delta_{2^{n-1}}$ such that $a_1<1$, then $M_{r}^{\mathbf w}(U_1,\dots,U_n)$ is strictly sub-uniform for all $\mathbf w\in \Delta^+_n$.
\end{theorem}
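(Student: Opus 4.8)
The plan is to reduce to the harmonic mean $r=-1$ and then exploit the mixture structure of the copula. By the monotonicity of $M^{\mathbf w}_r$ in $r$ recalled in Section \ref{sec:2}, we have $M^{\mathbf w}_r\le M^{\mathbf w}_{-1}$ for $r\le-1$, so it suffices to establish strict sub-uniformity of $M^{\mathbf w}_{-1}$; strictness then transfers since $\{M^{\mathbf w}_{-1}\le p\}\subseteq\{M^{\mathbf w}_r\le p\}$. Writing $\mathbf U^{(i)}\overset{\mathrm d}{\sim} C^{(i)}$, the linearity of $p\mapsto\p(M^{\mathbf w}_{-1}(\cdot)\le p)$ in the copula (exactly as in Proposition \ref{prop:trivial2}) gives
\begin{align*}
\p\(M^{\mathbf w}_{-1}(U_1,\dots,U_n)\le p\)=\sum_{i=1}^{2^{n-1}}a_i\,\p\(M^{\mathbf w}_{-1}(\mathbf U^{(i)})\le p\),
\end{align*}
so I would analyze each extremal copula $C^{(i)}$ separately and then recombine.

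For $i=1$, the comonotone copula has all components equal to $U$, whence $M^{\mathbf w}_{-1}(U,\dots,U)=U$ and the corresponding term equals exactly $p$; this is precisely why the hypothesis $a_1<1$ is needed. The key observation for $i\ge2$ is that the weighted harmonic mean collapses to two dimensions: with $\alpha=\sum_{j\in J_i}w_j$ and $\beta=\sum_{j\in J_i^c}w_j$, the components split into a block equal to $U$ (indices in $J_i$) and a block equal to $1-U$ (indices in $J_i^c$), so
\begin{align*}
M^{\mathbf w}_{-1}(U_1,\dots,U_n)=\(\alpha U^{-1}+\beta(1-U)^{-1}\)^{-1}=M^{(\alpha,\beta)}_{-1}(U,1-U).
\end{align*}
Because the leading binary digit is always $0$ we have $1\in J_i$, giving $\alpha\ge w_1>0$, while $i\ge2$ forces $J_i^c\ne\emptyset$ and hence $\beta>0$; thus $(\alpha,\beta)\in\Delta_2^+$. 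Since $(U,1-U)$ is countermonotonic, which for $n=2$ is NUOD, Proposition \ref{th:NLD} yields $\p\(M^{(\alpha,\beta)}_{-1}(U,1-U)\le p\)>p$ for every $p\in(0,1)$.

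Putting the pieces together, $a_1<1$ guarantees some $i\ge2$ with $a_i>0$, so
\begin{align*}
\p\(M^{\mathbf w}_{-1}(U_1,\dots,U_n)\le p\)=a_1 p+\sum_{i\ge2}a_i\,\p\(M^{\mathbf w}_{-1}(\mathbf U^{(i)})\le p\)>a_1 p+\sum_{i\ge2}a_i\,p=p,
\end{align*}
which is the desired strict sub-uniformity, and the case of general $r\le-1$ follows from the opening reduction. I expect the only genuine step requiring care to be the two-dimensional collapse for $i\ge2$: verifying that the block weights $\alpha,\beta$ are both strictly positive under $\mathbf w\in\Delta_n^+$, and recognizing the surviving pair as a countermonotonic (hence NUOD) one so that Proposition \ref{th:NLD} applies. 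Everything else is bookkeeping with the mixture weights.
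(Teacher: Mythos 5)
Your proposal is correct and follows essentially the same route as the paper: reduce to $r=-1$, decompose the mixture copula into extremal copulas, collapse each extremal copula to a two-dimensional weighted harmonic mean of the countermonotonic pair $(U,1-U)$, invoke Proposition \ref{th:NLD} for $i\ge 2$, and use $a_1<1$ to get strictness. Your explicit check that the collapsed weights $(\alpha,\beta)$ lie in $\Delta_2^+$ is a detail the paper leaves implicit, but the argument is the same.
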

\begin{proof}
Let $(V_1,\dots,V_n)$ follow an extremal copula $C^{(k)}\in\mathcal C_{n}$ with an index set $ J_{k}\subseteq [n]$, $k\in[2^{n-1}]$. For $\mathbf w=(w_{1},\dots,w_{n})\in \Delta^+_n$ and $p\in(0,1)$, 
$$\p(M^{\mathbf w}_{-1} (V_1,\dots,V_n) \le p)=\p\(\sum_{i\in J_{k}}w_{i}V^{-1}+\sum_{j\in J_{k}^{c}}w_{j}(1-V)^{-1}\ge p^{-1}\)=\p(M^{\mathbf \eta}_{-1} (V,1-V) \le p),$$
where $\mathbf \eta=(\sum_{i\in J_{k}}w_{i},\sum_{j\in J_{k}^{c}}w_{j})\in \Delta_{2}$ and $V$ is a standard uniform random variable. Since $C^{(1)}$ is the comonotonicity copula, the above probability equals to $p$ if $k=1$. It is straightforward to check that $(V,1-V)$ is NUOD.
By Proposition \ref{th:NLD}, $M^{\mathbf \eta}_{-1} (V,1-V)$ is strictly sub-uniform if $k\ge 2$. Hence,  strict sub-uniformity holds for extremal copula $C^{(k)}$, $k=2,\dots,2^{n-1}$. As extremal mixture copula is a weighted mixture of extremal copulas, by Proposition \ref{prop:trivial2},   sub-uniformity holds for any extremal mixture copula. It is clear that  sub-uniformity is strict if $a_1\neq1$.
\end{proof}

\begin{remark}
 By \citet[Theorem 1]{mcneil2020attainability}, every Kendall's rank correlation matrix (Kendall matrix, for short) can be attained by some extremal mixture copula. Together with Theorem \ref{prop:extremal}, this implies that for any given Kendall matrix $\mathcal T$ and $r\le -1$, 
there always exists a copula with the specified Kendall matrix $\mathcal T$ such that 
 sub-uniformity holds, and strict sub-uniformity holds if $\mathcal T$ is not the matrix of ones.
\end{remark}
  
  \subsection{Clayton copula and positive dependence}
 {In this section, we study sub-uniformity for standard uniform random variables with a specific positive dependence structure, modelled by  Clayton copulas, which have been used to model p-values in, e.g., \cite{DG13}, \cite{BD14}, and \cite{neumann2019multivariate}.}  
  The Clayton copula $C$ with parameter $t>0$, denoted by $\mathrm{Clayton}(t)$, is   given by
$$
C(u_1,\dots,u_n) = \left(u_1^{-t}+\dots+u_n^{-t}- (n-1)\right)^{-1/t},~~ (u_1,\dots,u_n)\in(0,1)^n.
$$
The Clayton copula with $t>0$ represents a type of positive dependence, with $t\to \infty $ yielding comonotonicity and $t\downarrow0$ yielding independence.
 For a random vector following a Clayton copula, Kendall's tau of any pair of its components is equal to $t/(t+2)$. 
 
 Clayton copulas arise naturally in the following context.  
Suppose that $X_1,\dots,X_n$ are iid exponential random variables with parameter $\lambda$,  and $Y$ is a Gamma random variable with parameter $(1/t,1)$ independent of $X_1,\dots,X_n$; the exponential distribution with parameter $\lambda>0$ is given by $F(x)=1-\exp(-x/\lambda)$, $x\ge0$ and the Gamma distribution with parameter $(k,\theta)\in\R_+^2$ is given by $F(x)=(\Gamma(k)\theta^k)^{-1}\int_0^x y^{k-1}\exp(-y/\theta)\mathrm{d}y$, $x\ge 0$. 
The random vector $(T_1,\dots,T_n)=\({X_1}/{Y},\dots,{X_n}/{Y}\) $ is usually used to model the lifetimes of $n$ objects in a system; see, e.g., \cite{lindley1986multivariate}. 
   The joint distribution of  $(T_1,\dots,T_n)$ is known as a multivariate Pareto distribution of type II with marginal distribution 
   $G(x)=1-\({\lambda}/{(\lambda+x)}\)^{1/t}$ for $ x\ge 0.$ 
Let 
\begin{equation}\label{eq:rw-2} (U_1,\dots,U_n)=(1-G(T_1),\dots,1-G(T_n))
=(1-G(X_1/Y),\dots,1-G(X_n/Y)).
\end{equation} Each of $U_1,\dots,U_n$ follows a standard uniform distribution and $(U_1,\dots,U_n)$   has a Clayton($t$) copula; see, e.g., \cite{sarabia2016risk}.

The next result gives sub-uniformity of the   $r$-mean of p-values following a corresponding Clayton copula, as well as a positive result on a type-I error rate bound for the $r$-mean of p-values.

\begin{theorem}\label{th:Clayton}
  Let $ t\ge1$, $\mathbf w\in \Delta^+_n$ and $(U_1,\dots,U_n)\overset{\mathrm{d}}{\sim}\mathrm{Clayton}(t)$.  If $r\ge t$, then $M_{-r}^\mathbf w(U_1,\dots,U_n)$ is strictly sub-uniform.
Moreover, for $p\in(0,1)$ and $s\in [1,t]$, we have
$$\p\(M^{\mathbf w}_{-s}(U_1,\dots,U_n)\le p\) \le G_{1/t} \left(\frac{1}{p^{-t}-1}\right),$$
where  $G_{1/t}$ is the cdf of a Gamma distribution with parameter $(1/t,1)$. In particular, 
\begin{align}\label{eq:rw-pos} \p\(M^{\mathbf w}_{-1}(U_1,\dots,U_n)\le p\) \le \sup_{b\ge 1}G_{1/b} \left(\frac{1}{p^{-b}-1}\right).\end{align}
\end{theorem}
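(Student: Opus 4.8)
The plan is to reduce everything to the single boundary exponent $-t$ and then exploit the stochastic representation of the Clayton copula in \eqref{eq:rw-2}. First I would invoke monotonicity of generalized means in their index (Theorem 16 of \cite{HLP34}): since $M^{\mathbf w}_a$ is increasing in $a$, for $r\ge t$ we have $M^{\mathbf w}_{-r}\le M^{\mathbf w}_{-t}$, whereas for $s\in[1,t]$ we have $M^{\mathbf w}_{-s}\ge M^{\mathbf w}_{-t}$. Hence strict sub-uniformity for every $r\ge t$ follows from that of $M^{\mathbf w}_{-t}$, and the upper bound for every $s\in[1,t]$ follows from the corresponding bound on $M^{\mathbf w}_{-t}$; both claims thus collapse to the case of index $-t$.

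Next I would use the representation $U_i=(1+X_i/Y)^{-1/t}$ (taking $\lambda=1$, as the copula is scale-free), where $X_1,\dots,X_n$ are i.i.d.\ standard exponential and $Y\sim\mathrm{Gamma}(1/t,1)$ is independent. This yields the clean identity $U_i^{-t}-1=X_i/Y$, so that with $W=\sum_{i=1}^n w_iX_i$ one has $\sum_i w_iU_i^{-t}=1+W/Y$. Consequently $\{M^{\mathbf w}_{-t}\le p\}$ is exactly $\{W\ge cY\}$ with $c=p^{-t}-1>0$, and conditioning on $W$ gives $\p(M^{\mathbf w}_{-t}\le p)=\E[G_{1/t}(W/c)]$. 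The structural fact driving both conclusions is that $G_{1/t}$ is concave on $(0,\infty)$: since $t\ge1$, the density $x^{1/t-1}e^{-x}/\Gamma(1/t)$ is (strictly) decreasing, so its primitive $G_{1/t}$ is (strictly) concave.

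For the upper bound I would apply Jensen's inequality with $\E[W]=\sum_i w_i=1$ to get $\E[G_{1/t}(W/c)]\le G_{1/t}(\E[W]/c)=G_{1/t}(1/c)$, which is the asserted bound since $1/c=1/(p^{-t}-1)$; the display \eqref{eq:rw-pos} then follows by observing that $s=1\le t=b$ is admissible for each $b\ge1$ and taking the supremum over $b$, so that the bound holds uniformly across all Clayton parameters $t\ge1$. For strict sub-uniformity I would instead compare $W$ with a single exponential: writing $g(x)=G_{1/t}(x/c)$ (concave), pathwise Jensen gives $g(\sum_i w_iX_i)\ge\sum_i w_ig(X_i)$, hence $\E[G_{1/t}(W/c)]\ge\sum_i w_i\E[g(X_i)]=\E[G_{1/t}(X_1/c)]$ by the i.i.d.\ property, and the reference value is computed exactly: $\E[G_{1/t}(X_1/c)]=\p(X_1\ge cY)=\E[e^{-cY}]=(1+c)^{-1/t}=p$. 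Thus $\p(M^{\mathbf w}_{-t}\le p)\ge p$; equivalently, $W$ is dominated by $X_1$ in convex order.

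The main obstacle is upgrading this to the strict inequality $>p$ demanded by \eqref{eq:subunif-3}. Here I would use that $G_{1/t}$ is strictly concave on $(0,\infty)$ and that, because $\mathbf w\in\Delta^+_n$ has all entries positive with $n\ge2$, the continuous i.i.d.\ variables $X_1,\dots,X_n$ are not almost surely equal; hence the pathwise Jensen step is strict on an event of positive probability, and this strictness survives taking expectations, yielding $\E[G_{1/t}(W/c)]>p$ for every $p\in(0,1)$. Two points require care: checking that the strict concavity of $G_{1/t}$ genuinely forces $t\ge1$ (for $t<1$ the exponent $1/t-1$ is positive and the density increases near $0$, so $G_{1/t}$ is no longer concave and the argument breaks, consistent with the hypothesis), and confirming that the positive-probability strict gap in the integrand propagates to a strict inequality after integration.
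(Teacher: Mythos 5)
Your proposal is correct and follows essentially the same route as the paper: reduce to the boundary index $-t$ by monotonicity of generalized means, use the exponential--Gamma representation of the Clayton copula to write $\p(M^{\mathbf w}_{-t}\le p)=\E[G_{1/t}(W/c)]$, exploit strict concavity of $G_{1/t}$ for $t\ge1$, and apply Jensen for the upper bound. The only (minor) difference is that you justify the strict lower bound via pointwise Jensen on the convex combination plus the a.s.\ non-equality of the iid $X_i$, whereas the paper invokes the strict concavity of $\phi$ together with the (implicit) strict convex-order domination of $\sum_i w_iX_i$ by $X_1$; your version makes that step fully explicit and is equally valid.
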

\begin{proof} 




We first show the case when $t=r$.
Note that  for  $\mathbf w\in\Delta_n^+$, simple algebra leads to \begin{align}\label{eq:rw-1}
\p(M^{\mathbf w}_{-t}(U_1,\dots,U_n)\le p) = \p\left(\frac{\sum_{i=1}^n w_iX_i}{Y}\ge \lambda p^{-t}-\lambda \right),
\end{align}
where $X_1,\dots,X_n$ and $Y$ are as in \eqref{eq:rw-2}. 
Therefore, to show  sub-uniformity, i.e., 
$\p(M^{\mathbf w}_{-t}(U_1,\dots,U_n)\le p)>p$, 
it suffices to show $\p(\sum_{i=1}^n w_iX_i/Y\ge x) >\p( X_1/Y\ge x)$ for all $x>0$.
We have 
$$\p\(\frac{\sum_{i=1}^n w_iX_i}{Y}\ge x\)=\E\[\p\(Y\le \frac{\sum_{i=1}^n w_iX_i}{x}\mid \sum_{i=1}^n w_iX_i\)\]=\E\[\phi\(\sum_{i=1}^n w_iX_i\)\],$$
where $\phi(y)=\p(Y\le y/x)$ for $y>0$. Assume $x=1$ without losing generality. Taking the second derivative of $\phi$, we get 
$$\phi^{''}(y)=\frac{y^{1/t-2}\exp\(-y\)(1/t-1-y)}{\Gamma(1/t) }.$$
As $t=r\ge 1$, we have $\phi^{''}<0$ and $\phi$ is strictly concave. Therefore,
$$\p\(\frac{\sum_{i=1}^n w_iX_i}{Y}\ge x\)=\E\[\phi\(\sum_{i=1}^n w_iX_i\)\]> \E\[\phi\(X_1\)\]=\p\(\frac{X_1}{Y}\ge x\).$$
Hence, we obtain the desired stochastic dominance for $t=r$. The statement for $t\in[1,r)$ is due to the fact that  $M_{-r}\le M_{-t}$ \cite[Theorem 16]{HLP34}.

To show the last inequality, note that Jensen's inequality gives
$$\E\[\phi\(\sum_{i=1}^n w_iX_i\)\]
\le \phi \(\sum_{i=1}^n w_i\E[X_i]\) =\phi(\lambda) =\p(Y\le \lambda /x)=G_{1/t}(\lambda/x). $$
Hence, using \eqref{eq:rw-1},  
we get 
$$
\p(M_{-t}(U_1,\dots,U_n)\le p) 
\le G_{1/t}(1/(p^{-t}-1)),$$
and the desired upper bound follows from noting again that $M_{-s}\ge M_{-t}$ for $s\le t$.
\end{proof}

 Applying Theorem \ref{th:Clayton} with the special case $r=t=s=1$, we get that, if $(U_1,\dots,U_n)\overset{\mathrm{d}}{\sim}\mathrm{Clayton}(1)$,  then   
\begin{align*} 
p<\p\(M_{-1}(U_1,\dots,U_n)\le p\) \le 1-e^{1-1/(1-p)}
\le \frac{1}{1-p} -1  = \frac p {1-p}.
\end{align*}
As a consequence, although being anti-conservative without correction,    the harmonic mean p-value becomes valid with the simple threshold $t_p=p/(1+p)$, i.e., $\p\(M_{-1}(U_1,\dots,U_n)\le t_p\)\le p$ holds for any $n\in\N$.
The needed correction is minor since $p$ and $p/(1+p)$ are very close for small $p$.

The null p-values following
Clayton(1) is a strong assumption, and it can be relaxed by using \eqref{eq:rw-pos}.
Suppose that $(U_1,\dots,U_n)\overset{\mathrm{d}}{\sim}\mathrm{Clayton}(t)$ with some unknown $t\ge 1$. Define a constant 
 $$
\kappa =\sup_{p\in (0,0.1],~b\ge 1}\frac 1p G_{1/b} \left(\frac{1}{p^{-b}-1}\right).
 $$
By \eqref{eq:rw-pos}, we have
$\p\(M^{\mathbf w}_{-1}(U_1,\dots,U_n)\le p\) \le   \kappa p$   for all $p\in(0,0.1]$. 
  Numerical calculation gives $\kappa \approx 1.1304$; the maximum in computing $\kappa$ is approximately attained at $p=0.1$ and $b=2.0853$. Therefore, if $(U_1,\dots,U_n)\overset{\mathrm{d}}{\sim}\mathrm{Clayton}(t)$ with some $t\ge 1$, we can use a threshold $u_p=p/1.131$ for the harmonic mean p-value such that $\p(M^{\mathbf w}_{-1}(U_1,\dots,U_n)\le u_p)\le p$ for all $p\in(0,0.1]$. This correction is valid for all $n$. This shows that positive dependence makes the harmonic mean p-value well behaved; in sharp contrast, the needed correction {grows to infinity} as $n\to \infty$ in case of independence; see Section \ref{sec:5}. 
 
  The reason why the harmonic mean p-value behaves well for the Clayton copula is perhaps because $U_i^{-1}$, $i\in[n]$, is extremely heavy-tailed (i.e., $U_i^{-1}$ does not have a finite mean). It is well known in EVT that for iid extremely heavy-tailed random variables $U_1^{-1},\dots,U_n^{-1}$, a large value of $U_1^{-1}+\dots+U_n^{-1}$ (thus a small value of $M_{-1}(U_1,\dots,U_n)$)  is 
  most likely caused by 
 one extremely large  $U_i^{-1}$ rather than by several moderately large  $U_i^{-1}$ (see \cite{EKM97}). In the case of strong positive dependence, the possibility of getting one extremely large  $U_i^{-1}$ may be reduced (e.g., \cite{alink2004diversification}).

 In the case of the symmetric mean function  $M_r$, the distribution of $M_{-r}(U_1,\dots,U_n)$ with $(U_1,\dots,U_n)\overset{\mathrm{d}}{\sim}\mathrm{Clayton}(r)$ has an analytical formula provided below.
  
\begin{proposition}\label{lem:pareto-clayton}
Let $r\ge 1$ and $(U_1,\dots,U_n)\overset{\mathrm{d}}{\sim}\mathrm{Clayton}(r)$. For $p\in(0,1)$, we have
$$\p\(M_{-r}(U_1,\dots,U_n)\le p\)=1-B_{n,r}\(\frac{np^{-r}-n}{np^{-r}-n+1}\),$$
where $B_{n,r}$ is a Beta cdf given by 
\begin{equation*}
B_{n,r}(x)=\frac{\Gamma\(n+1/r\)}{\Gamma(n)\Gamma\(1/r\)}\int_{0}^{x}t^{n-1}(1-t)^{1/r-1}\mathrm{d}t,\mbox{~~}x\in(0,1).\end{equation*} 
\end{proposition}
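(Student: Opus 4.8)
The plan is to exploit the stochastic representation \eqref{eq:rw-2} that builds a Clayton copula from independent Gamma and exponential blocks, and then to recognize the resulting ratio as a Beta random variable. Specializing the identity \eqref{eq:rw-1} to the symmetric weights $\mathbf w=(1/n,\dots,1/n)$ and to $t=r$, I would first record that
$$\p(M_{-r}(U_1,\dots,U_n)\le p)=\p\left(\frac{\sum_{i=1}^n X_i}{Y}\ge n\lambda\left(p^{-r}-1\right)\right),$$
where $X_1,\dots,X_n$ are iid exponential with mean $\lambda$ (so that $\sum_{i=1}^n X_i$ is $\mathrm{Gamma}(n,\lambda)$) and $Y\sim\mathrm{Gamma}(1/r,1)$ is independent of them, exactly as in \eqref{eq:rw-2}. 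This step uses only that $U_i^{-r}=1+X_i/(\lambda Y)$ and that $M_{-r}\le p$ is equivalent to $\frac1n\sum_{i=1}^n U_i^{-r}\ge p^{-r}$ (raising to the negative power $-r$ flips the inequality). The proposition is thereby reduced to a single tail probability for the ratio of two independent Gamma variables.

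Next I would remove the scale. Setting $S=\lambda^{-1}\sum_{i=1}^n X_i\sim\mathrm{Gamma}(n,1)$, the event becomes $S/Y\ge n(p^{-r}-1)$, so $\lambda$ cancels, as it must since the Clayton copula does not depend on $\lambda$. The key step is the classical Gamma--Beta relation: for independent $S\sim\mathrm{Gamma}(n,1)$ and $Y\sim\mathrm{Gamma}(1/r,1)$, the ratio $Z=S/(S+Y)$ follows a $\mathrm{Beta}(n,1/r)$ distribution, whose cdf is precisely the $B_{n,r}$ in the statement (the Beta density with shape parameters $n$ and $1/r$ has the normalizing constant $\Gamma(n+1/r)/(\Gamma(n)\Gamma(1/r))$). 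Since $S/Y=Z/(1-Z)$ is a strictly increasing function of $Z$ on $(0,1)$, the event $\{S/Y\ge a\}$ with $a=n(p^{-r}-1)$ coincides with $\{Z\ge a/(1+a)\}$.

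Finally I would substitute $a=n(p^{-r}-1)=np^{-r}-n$ to obtain
$$\p(M_{-r}(U_1,\dots,U_n)\le p)=\p\left(Z\ge\frac{np^{-r}-n}{np^{-r}-n+1}\right)=1-B_{n,r}\left(\frac{np^{-r}-n}{np^{-r}-n+1}\right),$$
which is the claimed formula (continuity of $Z$ makes the distinction between $<$ and $\le$ irrelevant). I do not expect a serious obstacle here: the substantive content is carried entirely by the Gamma--Beta identity. The only points needing care are tracking the scale $\lambda$ correctly through the parametrization $F(x)=1-\exp(-x/\lambda)$ used in the paper, and checking that every inequality is flipped or preserved correctly when raising to the negative power $-r$ and when inverting the monotone map $z\mapsto z/(1-z)$. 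In this sense the main ``step'' is simply recognizing that the representation \eqref{eq:rw-2} turns the symmetric harmonic-type mean into a Beta tail probability.
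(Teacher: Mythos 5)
Your proof is correct and follows essentially the same route as the paper's: both reduce the event via the representation \eqref{eq:rw-1}--\eqref{eq:rw-2} to the tail probability of the ratio $\sum_i X_i/Y$ of independent Gamma variables. The only difference is at the last step, where the paper identifies $\sum_i T_i$ as a compound Gamma distribution and cites equation (1.2) of Dubey (1970) for its cdf, while you derive the same Beta tail directly and self-containedly from the classical fact that $S/(S+Y)\sim\mathrm{Beta}(n,1/r)$ for independent $S\sim\mathrm{Gamma}(n,1)$ and $Y\sim\mathrm{Gamma}(1/r,1)$ — the same identity in different packaging.
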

\begin{proof}
By \eqref{eq:rw-1},  
we  have
\begin{align*}
\p\(M_{-r}(U_1,\dots,U_n)\le p\)=\p\(\sum_{i=1}^nT_i\ge \lambda (np^{-r}-n)\).
\end{align*}
Since $\lambda$ is the scale parameter of the exponential distribution, the above probability is indifferent to $\lambda$. We assume $\lambda =1$ for simplicity. Given $Y=y$, the conditional distribution of $\sum_{i=1}^nX_i/Y$ is a Gamma distribution with parameter $(n,1/y)$. Since $Y$ is also a Gamma distribution, $\sum_{i=1}^nT_i$ follows a compound Gamma distribution. Using (1.2) of \cite{dubey1970compound}, we have the desired equality. 
\end{proof}


Besides the above Clayton copulas, we provide below two other positive dependence structures for which
  sub-uniformity holds.
\begin{example}
Let $X,X_i\overset{\mathrm{d}}{\sim} \mathrm U(0,1)$, $Y,Y_i\overset{\mathrm{d}}{\sim} \mathrm U(\beta,1)$, and $Z,Z_i\overset{\mathrm{d}}{\sim} \mathrm U(0,\beta)$, $i\in[n]$, be independent, where $\beta \in(0,1)$.   Assume that $(U_1,\dots,U_n)$ is modelled by one of the two cases below, that is, 
\begin{align}\label{ex:1}
&U_i=\id_{\{X\le \beta\}}Z_i+\id_{\{X> \beta\}}Y, ~~i\in[n]  
~~~~\mbox{or}~~~~  U_i=\id_{\{X_i\le \beta\}}Z+\id_{\{X_i> \beta\}}Y_i,~~ i\in[n].
\end{align}
Clearly, $U_i\overset{\mathrm{d}}{\sim} \mathrm U(0,1)$ for all $i\in[n]$. Moreover, $(U_1,\dots,U_n)$ defined by  \eqref{ex:1}  is positively dependent as for $i\neq j$, with $\mathrm {corr}(U_i,U_j)=1-\beta^3$ if $(U_1,\dots,U_n)$ is the first case  and $\mathrm {corr}(U_i,U_j)=\beta^4$ if $(U_1,\dots,U_n)$ is  the second case. If $(U_1,\dots,U_n)$ is modelled by either case in \eqref{ex:1}, it is known that $S(U_1,\dots,U_n)\simeq_{\mathrm st} U_1$ where $S(u_1,\dots,u_n)= \min\left\{ n u_{(i)}/i\right\}$, $(u_1,\dots,u_n) \in (0,\infty)^{n}$ (see Example 1 in \cite{samuel1996simes} and Proposition 3.4 in \cite{XH22}). Let  $r\le -1$.
By Theorem 16 in \cite{HLP34} and Theorem 3 in \cite{CLTW21}, 
$M_r\le M_{-1}\le S$. Hence, $M_{r}(U_1,\dots,U_n)$ is  sub-uniform if $(U_1,\dots,U_n)$ is modelled by \eqref{ex:1}.
\end{example}

\begin{remark}
As the problem of merging p-values is closely related to the problem of risk aggregation (see Section \ref{sec:0}),  our results on \eqref{eq:maineq} under various conditions have a direct interpretation in risk management. For all $i\in[n]$, let $X_i=U_i^{r}$ with $r\le -1$. Then $X_i$ follows a Pareto distribution with infinite mean, which is widely used in modeling extremely heavy-tailed financial losses. For $(w_1,\dots,w_n)\in\Delta_n$ and $x>0$,  sub-uniformity of $U_1,\dots,U_n$ implies 
$\p(\sum_{i=1}^nw_iX_i>x)\ge \p(X_1>x)$. Therefore,  for identically distributed (but possibly dependent) infinite-mean Pareto losses,   the average loss is strictly riskier than any individual loss; that is, diversification is harmful. Putting this in the language of risk measures,  the Value-at-Risk,  which is a quantile of a loss position, is strictly superadditive  at any level for these losses. This issue was extensively discussed in \cite{CEW24}, and here we use more general dependence models. See also \cite{NEC06} for  the diversification of infinite-mean models in the context of operational risk. 
\end{remark}

\section{Threshold under independence}\label{sec:5}

As we have seen above, the harmonic mean p-value is anti-conservative under a wide range of dependence assumptions. It is then worth studying its threshold by which the type-I error rate can be properly controlled below the significance level. However, explicit expressions of the probability distributions of the harmonic mean p-value are generally not available, even when p-values are independent. In this section, we focus on the independence case and use the generalized central limit theorem to derive the asymptotic threshold of the harmonic mean p-value as the number of p-values goes to infinity. The asymptotic threshold is potentially useful in genomewide study where there are a large number of p-values (e.g., \cite{storey2003statistical}), although the independence assumption may not be verifiable in such a context.

For $\alpha\in(0,1]$, $q_\alpha(X)$ is the left $\alpha$-quantile of a random variable $X$, defined as
\begin{align*}
    q_{\alpha}(X)&=\inf\{x\in \mathbb{R}\mid \mathbb{P}(X\leq x)\ge \alpha\}.
\end{align*}
We also use $F^{-1}(\alpha)$ for $q_\alpha(X)$ if $X$ follows a distribution $F$. Let $U_1,\dots,U_n$ be independent. For $p\in(0,1)$,  denote  by $a_{n,p}$ the threshold of the symmetric harmonic mean of p-values, that is,
$a_{n,p}=q_p(M_{-1} (U_1,\dots,U_n)).$ It is clear that
\begin{equation*}
\p(M_{-1} (U_1,\dots,U_n) < a_{n,p}) \le p.
\end{equation*}

  Let $S_1$ be a distribution function with characteristic function given by 
\begin{equation*}
    \int_\R \exp(i\theta x)\d S_1(x) =    \exp\left(-|\theta|(1+i\frac{2}{\pi}\sign(\theta)\log|\theta|)\right)~~\mbox{for}~~\theta \in \R,
\end{equation*}
where $\sign(\cdot) $ is the sign function. The distribution $S_1$ is a stable distribution with tail parameter $1$ (see \cite{samoradnitsky2017stable}). The following proposition gives an asymptotic approximation of $a_{n,p}$ for large $n$. Using $a_{n,p}$ is equivalent to the asymptotically exact test of the harmonic mean p-value method of \cite{W19}.
\begin{proposition}\label{prop:threshold}
For  $p\in (0,1)$, let $a_{n,p}$ be the threshold of $M_{-1}$. Then   
\begin{equation}\label{eq:threshold}
a_{n,p}\sim \(\frac{\pi}{2}S_{1}^{-1}(1-p)+\log\(\frac{n\pi}{2}\)+1-\gamma\)^{-1} \rightarrow 0 \mbox{~~~as $n \rightarrow \infty$},
\end{equation}
where $\gamma$ is the the Euler--Mascheroni constant.\footnote{The Euler--Mascheroni constant $\gamma$ is approximately $0.57721$.}
\end{proposition}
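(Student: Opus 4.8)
The plan is to reduce everything to the large-$n$ behaviour of the sum $S_n=\sum_{i=1}^n U_i^{-1}$, since $M_{-1}(U_1,\dots,U_n)=n/S_n$. Each $U_i^{-1}$ satisfies $\p(U_i^{-1}>x)=1/x$ for $x\ge 1$, so it is a (shifted) Pareto with tail index $1$ and infinite mean, placing $S_n$ in the domain of attraction of a stable law with index $1$. Because $S_n$ has a continuous distribution, the event $\{M_{-1}\le a\}$ equals $\{S_n\ge n/a\}$, and the defining relation of the threshold becomes the exact identity $a_{n,p}=n/q_{1-p}(S_n)$. Thus it suffices to locate the $(1-p)$-quantile of $S_n$ to leading order, and the whole proposition follows from a precise generalized central limit theorem for $S_n$.

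For the second step I would compute the characteristic function $\phi(\theta)=\E[e^{i\theta U^{-1}}]=\int_1^\infty e^{i\theta x}x^{-2}\,\mathrm{d}x$ and expand it as $\theta\downarrow 0$. Writing $\phi(\theta)=1+\theta\int_\theta^\infty(e^{iy}-1)y^{-2}\,\mathrm{d}y$ and isolating the logarithmic divergence of the inner integral near $y=0$ gives $\phi(\theta)=1-i\theta\log\theta+K\theta+o(\theta)$, where $K=\lim_{\theta\downarrow0}[\int_\theta^\infty(e^{iy}-1)y^{-2}\,\mathrm{d}y+i\log\theta]$. Passing $\phi(\theta/c_n)^n$ through $\log(1+z)\approx z$ and matching the resulting exponent to the characteristic function of $S_1$ (for $\theta>0$, the target exponent is $-\theta-i\tfrac2\pi\theta\log\theta$) forces the scale $c_n=\tfrac{\pi}{2}n$ from the real part and the centering $d_n=n[\log(\tfrac{\pi}{2}n)+\mathrm{Im}(K)]$ from the imaginary part, so that $(S_n-d_n)/c_n\xrightarrow{d}S_1$.

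The step I expect to be the main obstacle is the exact evaluation of $K$, since getting an unspecified $O(1)$ shift instead of the precise constant would destroy the $\log(n\pi/2)+1-\gamma$ term. The real part is the standard $\mathrm{Re}(K)=\int_0^\infty(\cos y-1)y^{-2}\,\mathrm{d}y=-\pi/2$, which is exactly what produces the scale $\tfrac\pi2 n$. The delicate piece is $\mathrm{Im}(K)=\int_0^\infty \frac{\sin y-y\,\mathds{1}_{\{y\le 1\}}}{y^2}\,\mathrm{d}y$; integrating $\int \sin y/y^2$ by parts turns it into the cosine integral, and using $\mathrm{Ci}(\epsilon)=\gamma+\log\epsilon+o(1)$ as $\epsilon\downarrow0$ yields $\mathrm{Im}(K)=\lim_{\epsilon\downarrow0}[\sin\epsilon/\epsilon-\gamma-\log\epsilon+\log\epsilon]=1-\gamma$. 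This is precisely where the Euler--Mascheroni constant enters, giving $d_n=n[\log(\tfrac{\pi}{2}n)+1-\gamma]$.

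Finally, since $S_1$ has a continuous, strictly increasing distribution, weak convergence upgrades to convergence of quantiles, so by affine equivariance $q_{1-p}(S_n)=d_n+c_n S_1^{-1}(1-p)+o(c_n)=n[\tfrac{\pi}{2}S_1^{-1}(1-p)+\log(\tfrac{n\pi}{2})+1-\gamma]+o(n)$. Since the bracketed factor grows like $\log n$, the dominant term of $q_{1-p}(S_n)$ is of order $n\log n$ and the $o(n)$ error is negligible against it; dividing $n$ by this expression then gives $a_{n,p}=n/q_{1-p}(S_n)\sim[\tfrac{\pi}{2}S_1^{-1}(1-p)+\log(\tfrac{n\pi}{2})+1-\gamma]^{-1}$, which is the claimed formula, and the convergence to $0$ is immediate from the $\log n$ growth of the bracket.
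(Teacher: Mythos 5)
Your argument is correct and follows essentially the same route as the paper: both reduce the threshold to the $(1-p)$-quantile of $\sum_{i=1}^n U_i^{-1}$, invoke the $\alpha=1$ stable central limit theorem with scaling $c_n=n\pi/2$, and extract the constant $1-\gamma$ from the small-argument asymptotics of the cosine integral. The only cosmetic difference is that you derive the norming sequences directly from the expansion of the characteristic function, whereas the paper quotes them from Samorodnitsky and Taqqu and then computes the asymptotics of $d_n/c_n$; the decisive calculation $\sin(\epsilon)/\epsilon-\mathrm{Ci}(\epsilon)+\log\epsilon\to 1-\gamma$ is identical in both.
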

\begin{proof}
Note that the random variables  $U_{1}^{-1},\dots,U_{n}^{-1}$ follow a Pareto distribution with distribution function $\mathbb{P}(U^{-1}\leq x)=1-x^{-1}$, $x\in[1,\infty)$.
By the generalized central limit theorem (see Theorem 1.8.1 in \cite{samoradnitsky2017stable}), sum of iid Pareto random variables $U_{1}^{-1},\dots,U_{n}^{-1}$ behaves like a stable distribution with tail parameter $1$ for large  $n$. Let  $Z\overset{\mathrm{d}}{\sim} S_1$. Hence for $p\in(0,1)$, 
\begin{align*}
    \lim_{n\rightarrow \infty}\frac{\p\(M_{-1} (U_1,\dots,U_n) \le p\)}{\p\(Z\ge c_n^{-1}(np^{-1}-d_n)\)}=\lim_{n\rightarrow \infty}\frac{\p\(c_n^{-1}(\sum_{i=1}^nU_i^{-1}-d_n)\ge c_n^{-1}(np^{-1}-d_n)\)}{\p\(Z\ge c_n^{-1}(np^{-1}-d_n)\)}=1,
\end{align*}
where $d_n=\displaystyle \frac{\pi n^{2}}{2}\int_{1}^{\infty}\sin\left(\frac{2 x}{n\pi}\right) x^{-2} \d x$ and $c_n=n\pi/2$.
This implies
$$  a_{n,p}=q_p(M_{-1} (U_1,\dots,U_n))\sim  \(\frac{\pi}{2}S_{1}^{-1}(1-p)+ \frac{n\pi }{2}\int_{1}^{\infty}\sin\left(\frac{2 x}{n\pi}\right)  x^{-2} \d x\)^{-1}:=b_{n,p} \mbox{~~~~as $n \rightarrow \infty$}.$$
By Taylor's expansion and properties of the cosine integral, we get   
\begin{align*}
\frac{n\pi  }{2}\int_{1}^{\infty}\sin\left(\frac{2 x}{n\pi}\right)  x^{-2} \d x&=\frac{n\pi  }{2}\sin\(\frac{2}{n\pi}\)+\int_{\frac{2}{n\pi}}^\infty\frac{\cos(y)}{y}\mathrm{d}y\\
&=1+\sum_{i=1}^{\infty}\frac{(-1)^i(2n^{-1}\pi^{-1})^{2i}}{(2i+1)!}-\(\gamma+\log\(\frac{2}{n\pi}\)-\int^{\frac{2}{n\pi}}_0\frac{1-\cos(y)}{y}\mathrm{d}y\)\\
&\sim \log\(\frac{n\pi}{2}\)+1-\gamma.
\end{align*}
 Hence, as $n \rightarrow \infty$,
 $$a_{n,p}\sim b_{n,p}\sim \(\frac{\pi}{2}S_{1}^{-1}(1-p)+\log\(\frac{n\pi}{2}\)+1-\gamma\)^{-1}.$$ 
 We have the desired result.
\end{proof}
Proposition \ref{prop:threshold} means that as more independent p-values are merged by $M_{-1}$, a smaller threshold needs to be used. In other words, there does not exist a constant multiplier which makes the harmonic mean p-value valid,  {consistent with \cite{W19}}. By \eqref{eq:threshold},  the   multiplier $\kappa_n$ such that $\p(M_{-1} (U_1,\dots,U_n) < p) \le \kappa_n p$  {grows at a rate of $\log n$}  as $n$ goes to infinity.
This is in sharp contrast to the dependence structure modelled by the Clayton copulas in Theorem \ref{th:Clayton}, where the correction does not  {go to infinity} as $n$ increases.

\cite{CLTW21} showed that the harmonic mean p-value method is closely related to two commonly used merging methods, the Cauchy combination \citep{LX20} and the Simes methods \citep{S86}. 
The Cauchy combination method uses $M_g$ (see Section \ref{sec:0}) to combine p-values, where $g$ is chosen to be the quantile function of the standard Cauchy distribution, i.e., $g(p)=\tan(\pi(p-0.5))$ for $p\in(0,1)$. The Simes method merges p-values via the Simes function $S$ defined in Section \ref{sec:2}.
In contrast to the harmonic mean p-value method, the Simes and the Cauchy combination methods always produce valid merged p-values for any number of independent p-values.
Hence, no correction is required for the merged p-values of the Simes and the Cauchy combination methods. This holds not only for independent p-values, but also for p-values modelled by a wide range of dependence structures. For instance, the Cauchy combination method is valid for the dependence structures considered by \cite{pillai2016unexpected}.
The Simes method is also conservative if the test statistics follow a multivariate normal distribution with nonnegative correlations \citep{sarkar1998some}. In this case, however, both the harmonic mean p-value and the Cauchy combination methods seem to be anti-conservative, based on numerical experiments; see Section \ref{sec:6} for the harmonic mean p-value and the simulation results in \cite{CLTW21} for the Cauchy combination method.  {See also \cite{rustamov2020kernel} and \cite{fang2021heavy} for comparisons between the harmonic mean and the Cauchy combination methods.}

\section{Discrete uniform random variables}\label{sec:4}

In this section, instead of considering standard uniform random variables, we study discrete uniform random variables $U_1^m,\dots,U_n^m$ on a finite set $\{1/m,\dots,m/m\}$ of $m$ equidistant   points. 
This setting concerns discrete p-values, which may be obtained from, for instance, binomial test and  conformal p-scores; see \cite{VGS05} and the more recent \cite{BCRLS21}.


We first note that for discretely distributed $U_1^m,\dots,U_n^m$, one cannot expect  $$ M^{\mathbf w}_{r} (U_1^m,\dots,U_n^m) \preceq_{\mathrm st} U_1^m $$ to hold for any $r \in \R$ and $\mathbf w\in \Delta^+_n$ unless $U_1^m,\dots,U_n^m$ are identical.
The reason is that $$
\p\left( M^{\mathbf w}_{r}(U_1^m,\dots,U_n^m) \le 1/m\right)  = 
\p\left(  U_1^m=\dots=U_n^m= 1/m\right) ,
$$
which is less than $1/m$ unless the events $U_i^m=1/m$ for $i\in[n]$ occur together almost surely.
 Applying similar arguments on $\p\left( M^{\mathbf w}_{r}(U_1^m,\dots,U_n^m) \le k/m\right)$, $k\in[m]$, leads to $U_i^m=k/m$ for $i\in[n]$ also occur together almost surely for all $k\in [m]$. Hence,  $ M^{\mathbf w}_{r} (U_1^m,\dots,U_n^m) \preceq_{\mathrm st} U_1 $ implies that  $U_1^m,\dots,U_n^m$ are identical.
This argument is similar to Proposition \ref{prop:trivial}.

In the context  of hypothesis testing, we are more interested in whether the following inequality holds,
\begin{equation}
\label{eq:discrete}
\p\left( M^{\mathbf w}_{r}(U_1^m,\dots,U_n^m) \le p \right)  > p ~~\mbox{for some pre-specified}~ p\in(0,1),
\end{equation}
where $r\le -1$.
 Based on previous discussions on  sub-uniformity for standard uniform random variables, we may expect that  \eqref{eq:discrete}  holds for  $(U_1^m,\dots,U_n^m)$ with large $m$ if 
 it has a copula for which sub-uniformity holds.
The intuition is that if $m$ is very large, the distribution of each $U_1^m,\dots,U_n^m$ is close to the uniform distribution on $(0,1)$. If $U_1^m,\dots,U_n^m$ are
NUOD,
we show below that \eqref{eq:discrete}  holds asymptotically as $m$ goes to infinity in the case of symmetric mean function. Following a similar line of thought,  the corresponding result also holds if $(U_1^m,\dots,U_n^m)$ has certain Clayton copula or an extremal mixture copula, as in Section \ref{sec:3}.

\begin{theorem}\label{th:discrete}
Let $r \le- 1$, $p\in(0,1)$, and  $U_1^m,\dots,U_n^m$ be  NUOD discrete uniform random variables on $\{1/m,\dots,m/m\}$, $m\ge2$. There exists a sequence $\{p_m:m\ge 2\}$ such that 
$$\p\(M_{r}(U_1^m,\dots,U_n^m)\le p\)\ge p_m\xrightarrow[m \to \infty]{} p.$$ 
Moreover, if $m>n^{-1/r}p^{-1}$, we can take
$$p_m=p-\frac{p^{1-r}}{m}\(\(np^r-(n-1)\(\frac{m+1}{m}\)^r\)^{1/r}-\frac{1}{m}\)^{r-1},$$
and $p_m=0$ otherwise.
\end{theorem}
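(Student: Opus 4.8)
The plan is to transport the continuous result (Proposition~\ref{th:NLD}) to the discrete setting via a coupling, paying only a boundary correction of the claimed size. Since $r\le-1<0$, the defining event simplifies: $M_r(u_1,\dots,u_n)\le p$ is equivalent to $\sum_{i=1}^n u_i^r\ge np^r$. I would then couple the discrete uniforms to continuous ones by writing $U_i^m=K_i/m$ with $K_i$ uniform on $\{1,\dots,m\}$ and setting $\tilde U_i=(K_i-W_i)/m$, where $W_1,\dots,W_n$ are i.i.d.\ $\mathrm U(0,1)$ independent of $(U_1^m,\dots,U_n^m)$. Then each $\tilde U_i$ is $\mathrm U(0,1)$ and $\tilde U_i\le U_i^m\le\tilde U_i+1/m$. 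The structural point is that $(\tilde U_1,\dots,\tilde U_n)$ is again NUOD: conditioning on $W$ and using that $(K_1,\dots,K_n)$ is NUOD (scaling by $m>0$ preserves NUOD) gives a product upper bound whose factors involve only $W_i$, so taking expectations over the independent $W_i$ factorizes it into $\prod_i\p(\tilde U_i>x_i)$. Because $M_r$ is coordinatewise increasing for $r<0$, we get $M_r(U_1^m,\dots,U_n^m)\le M_r(\tilde U_1+1/m,\dots,\tilde U_n+1/m)$, hence $\p(M_r(U_1^m,\dots,U_n^m)\le p)\ge\p(M_r(\tilde U+\mathbf{1}/m)\le p)$, where $\mathbf{1}=(1,\dots,1)$.

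Writing $g(u)=\sum_{i=1}^n u_i^r$, which is decreasing in each argument, the event $\{M_r(\tilde U+\mathbf{1}/m)\le p\}=\{g(\tilde U+\mathbf{1}/m)\ge np^r\}$ is contained in $\{g(\tilde U)\ge np^r\}=\{M_r(\tilde U)\le p\}$. Therefore $\p(M_r(\tilde U+\mathbf{1}/m)\le p)=\p(M_r(\tilde U)\le p)-\p(\mathcal B)$, where $\mathcal B=\{g(\tilde U+\mathbf{1}/m)<np^r\le g(\tilde U)\}$ is the boundary shell. By Proposition~\ref{th:NLD} applied to the continuous NUOD vector $(\tilde U_1,\dots,\tilde U_n)$, one has $\p(M_r(\tilde U)\le p)>p$. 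Combining, $\p(M_r(U_1^m,\dots,U_n^m)\le p)>p-\p(\mathcal B)$, so the entire statement reduces to the explicit estimate $\p(\mathcal B)\le \frac{p^{1-r}}{m}\,c_m^{\,r-1}$, where I abbreviate $c_m=\big(np^r-(n-1)((m+1)/m)^r\big)^{1/r}-1/m$. Note this argument runs for each fixed $r\le-1$ directly, so the $r$-monotonicity of $M_r$ is not needed here.

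The heart of the proof, and the step I expect to be hardest, is the estimate of $\p(\mathcal B)$. First, on $\mathcal B$ every coordinate exceeds $c_m$: from $g(\tilde U+\mathbf{1}/m)<np^r$ together with $\tilde U_j\le1$, which forces $(\tilde U_j+1/m)^r\ge((m+1)/m)^r$ for every $j$, isolating the $i$-th term yields $(\tilde U_i+1/m)^r<np^r-(n-1)((m+1)/m)^r$, i.e.\ $\tilde U_i>c_m$. I would then decompose $\mathcal B$ by the unique coordinate at which $g$ crosses the level $np^r$ as the shift $\mathbf{1}/m$ is switched on one coordinate at a time; since each increment strictly lowers $g$, this produces a disjoint union over $k\in[n]$ of events $\{b_k-1/m<\tilde U_k\le b_k\}$, with $b_k$ a function of the remaining coordinates. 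Each pins $\tilde U_k$ to an interval of length $1/m$, so its uniform marginal controls the probability; the lower bound $\tilde U_k>c_m$ together with $M_r(\tilde U)\approx p$ on $\mathcal B$ upgrades the crude $1/m$ to the Jacobian-weighted bound $\frac{p^{1-r}}{m}c_m^{\,r-1}$, with $c_m^{\,r-1}$ arising from $\partial_u u^r$ evaluated no larger than at $u=c_m$ and $p^{1-r}$ from $M_r^{\,1-r}$ at the crossing. Making this conversion rigorous under \emph{mere} NUOD—rather than independence, where conditioning on the other coordinates leaves $\tilde U_k$ uniform—is the main obstacle; here I would argue that negative dependence cannot inflate the joint boundary configuration, so that the independence estimate persists as an upper bound for the NUOD case.

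Finally I would dispose of the degenerate range. When $m\le n^{-1/r}p^{-1}$ the prescribed $p_m$ equals $0$ and the inequality is vacuous. For $m>n^{-1/r}p^{-1}$, equivalently $np^r<(1/m)^r$, one checks $c_m>0$ (since $np^r<(1/m)^r\le (1/m)^r+(n-1)((m+1)/m)^r$), so that $c_m$ and the boundary estimate are meaningful and $p_m=p-\frac{p^{1-r}}{m}c_m^{\,r-1}$ is the resulting lower bound. The convergence $p_m\to p$ is then immediate: as $m\to\infty$ one has $((m+1)/m)^r\to1$ and $1/m\to0$, whence $c_m\to(np^r-(n-1))^{1/r}>0$ and the correction $\frac{p^{1-r}}{m}c_m^{\,r-1}\to0$.
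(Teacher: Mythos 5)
Your skeleton matches the paper's: couple the discrete NUOD vector to a continuous NUOD vector $(\tilde U_1,\dots,\tilde U_n)$ with $\tilde U_i\le U_i^m\le \tilde U_i+1/m$ (the paper gets this coupling from Theorem 3 of \cite{LWZZ24} rather than by randomized rounding, but your construction and the verification that independent noise preserves NUOD are fine), reduce to the perturbed event $\{M_r(\tilde U+\mathbf 1/m)\le p\}$, and observe that on the boundary shell every coordinate must exceed $c_m$ (your $c_m$ is exactly the paper's $z_p$). The problem is the step you yourself flag as the main obstacle: the upper bound $\p(\mathcal B)\le \frac{p^{1-r}}{m}c_m^{\,r-1}$. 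Your proposed argument --- decompose $\mathcal B$ by the coordinate at which $g$ crosses the level, pin that coordinate to an interval of length $1/m$ whose endpoints depend on the other coordinates, and invoke its uniform marginal --- is only valid when you can control the conditional law of $\tilde U_k$ given the rest, e.g.\ under independence. NUOD gives one-sided bounds on upper-orthant probabilities and says nothing about the probability of such data-dependent shells; the assertion that ``negative dependence cannot inflate the joint boundary configuration'' is not a theorem, and it is precisely the hard content you would need to supply. As written, the proof has a genuine gap at its central estimate.

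The paper avoids ever bounding the shell probability. In your notation: since $(u+\epsilon)^r\ge u^r+\epsilon r u^{r-1}$ and all coordinates exceed $z_p$ on the relevant event, the shell is contained in $\{np^r\le \sum_i \tilde U_i^r< np^r-\tfrac{nr}{m}z_p^{\,r-1}\}$, so
\begin{equation*}
\p\(\tfrac1n\textstyle\sum_i \tilde U_i^r\ge p^r\)-\p(\mathcal B)\;\ge\;\p\(\tfrac1n\textstyle\sum_i \tilde U_i^r\ge p^r-\tfrac{r}{m}z_p^{\,r-1}\),
\end{equation*}
because $\p(X\ge a)-\p(a\le X<b)=\p(X\ge b)$. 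The right-hand side is a single tail probability at the slightly shifted threshold $(p^r-rz_p^{\,r-1}/m)^{1/r}<p$, to which Proposition \ref{th:NLD} applies directly; the convexity bound $(p^r-rz_p^{\,r-1}/m)^{1/r}\ge p-z_p^{\,r-1}p^{1-r}/m$ then yields the stated $p_m$. So the quantity $\frac{p^{1-r}}{m}z_p^{\,r-1}$ arises from a Taylor estimate of the shifted threshold, not from a bound on $\p(\mathcal B)$ --- which is why the paper's proof needs nothing beyond NUOD while yours stalls. If you replace your shell estimate with this telescoping step, the rest of your argument goes through.
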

\begin{proof}
By Theorem 3 of \cite{LWZZ24} and its proof, random variables $U_1^m,\dots,U_n^m$ are   NUOD if and only if 
there exist NUOD standard uniform random variables $V_1^m,\dots,V_n^m$ such that 
$$U_i^m=\sum_{j=1}^m \frac{j}{m}\id_{\{(j-1)/m<V_i^m\le j/m\}},~~~ i\in[n].$$ 
For $p\in(0,1)$,  let $R(p)=\p\(M_{r}(U_1^m,\dots,U_n^m)\le p\)$, $r\le -1$.  Define the following events 
$$A=\left\{\frac{1}{n}\sum_{i=1}^n\(V_i^m+\frac{1}{m}\)^r< p^r\right\},~~B=\left\{\frac{1}{n}\sum_{i=1}^n \(V_i^m\)^r\ge p^r\right\},$$
$$\mbox{~and~~}C=\left\{\frac{1}{n}\sum_{i=1}^n\(\(V_i^m\)^r+\frac{r}{m}\(V_i^m\)^{r-1}\)< p^r\right\}.$$
Note that $U_i^m\le V_i^m+1/m$ for all $i\in[n]$.  We have 
\begin{align}\label{eq:th3-1}
R(p)
&=\p\(\frac{1}{n}\sum_{i=1}^n(U_i^m)^r\ge p^r\) \nonumber\\
&\ge\p\(\frac{1}{n}\sum_{i=1}^n \(V_i^m+\frac{1}{m}\)^r\ge p^r\)\nonumber \\
&\ge\p\(\left\{\frac{1}{n}\sum_{i=1}^n \(V_i^m+\frac{1}{m}\)^r\ge p^r\right\}\cap B\)\nonumber\\
&=\p\(B\)-\p\(A\cap B\) =\p\(B\)-\p\(A\cap B\cap C\).
\end{align}
The last equality is due to  $A\subseteq C$  as for fixed $x\in (0,1)$, $(x+\epsilon)^r\ge x^r+\epsilon rx^{r-1}$, $\epsilon\in(0,1)$.
 Note that
\begin{align}\label{eq:th3-2}
A
&=
\bigcap_{i=1}^n\bigg\{
\(V_i^m+\frac{1}{m}\)^r< np^r-\sum_{j\in[n]/i}\(V_j^m+\frac{1}{m}\)^r
\bigg\} \nonumber\\
&\subseteq
\bigcap_{i=1}^n\bigg\{
\(V_i^m+\frac{1}{m}\)^r< np^r-(n-1)\(\frac{m+1}{m}\)^r
\bigg\} \nonumber\\
&=
\bigcap_{i=1}^n\bigg\{
V_i^m> \(np^r-(n-1)\(\frac{m+1}{m}\)^r\)^{1/r}-\frac{1}{m}
\bigg\} =
\bigcap_{i=1}^n\bigg\{
V_i^m> z_p
\bigg\},
\end{align} 
  where 
 $$z_p=\(np^r-(n-1)\(\frac{m+1}{m}\)^r\)^{1/r}-\frac{1}{m}.$$
Note that $z_p$ is positive for large $m$ and negative for small $m$. If  $m\le n^{-1/r}p^{-1}$, we let $p_m=0$ to get the trivial bound $R(p)\ge 0$.
  
  We next focus on the case that $m>n^{-1/r}p^{-1}$. Since $p>n^{-1/r}m^{-1}\ge(n^{-1}(m^{-r}+(n-1)(m+1)^r m^{-r}))^{1/r}>m^{-1}$, we can verify that $0<z_p<1$. Let 
  $$D=\bigcap_{i=1}^n\{
V_i^m> z_p
\}.$$
  By \eqref{eq:th3-1} and \eqref{eq:th3-2}, we have
\begin{align*}
R(p)
&\ge\p(B)-\p(A\cap B\cap C)\\
&=\p(B)-\p(A\cap B\cap C\cap D)\\
&\ge \p(B)-\p(B\cap C\cap D)\\
&= \p\(B\)-\p\(\left\{p^r\le\frac{1}{n}\sum_{i=1}^n \(V_i^m\)^r< p^r-   \frac{1}{n}\sum_{i=1}^n \frac{r}{m}\(V_i^m\)^{r-1}\right\}\cap D\)\\
&\ge \p\(B\)-\p\(\left\{p^r\le\frac{1}{n}\sum_{i=1}^n \(V_i^m\)^r< p^r-\frac{rz_p^{r-1}}{m}\right\}\cap D\)\\
&\ge\p\(\frac{1}{n}\sum_{i=1}^n \(V_i^m\)^r\ge p^r\)-\p\(p^r\le\frac{1}{n}\sum_{i=1}^n \(V_i^m\)^r< p^r-\frac{rz_p^{r-1}}{m}\)\\
&=\p\(M_{r}(V_1^m,\dots,V_n^m)\le \(p^r-\frac{rz_p^{r-1}}{m}\)^{1/r}\).
\end{align*}
By Proposition \ref{th:NLD},
\begin{align*}
R(p)&\ge \(p^r-\frac{rz_p^{r-1}}{m}\)^{1/r}\\
&\ge p-\frac{z_p^{r-1}}{m}p^{1-r}\\
&=p-\frac{p^{1-r}}{m}\(\(np^r-(n-1)\(\frac{m+1}{m}\)^r\)^{1/r}-\frac{1}{m}\)^{r-1}=p_m. 
\end{align*}
It is straightforward to verify that $p_m$ goes to $p$ as $m$ goes to $\infty$. 
\end{proof}

\section{Numerical examples}\label{sec:6}
Throughout this section, let $R_{n}(p)=\p(M_{-1}(U_1,\dots,U_n)\le p)$ for $p\in (0,1)$, where $U_1,\dots,U_n$ are standard uniform random variables or discrete uniform random variables on $\{1/m,\dots,m/m\}$ with $m\ge 2$.
  We first provide a few small numerical examples to illustrate sub-uniformity for dependent $U_1,\dots,U_n$. 
 The first example is for standard uniform random variables, which follow  the copula  generated by an equicorrelated Gaussian distribution with  $\rho\in[0,1]$.  Let $\Phi$ be the standard normal distribution function, and $Z,Z_1,\dots,Z_n$ be independent identically distributed standard normal random variables. Write 
 $$U_{i}=\Phi(X_{i}),\mbox{~where~}X_{i}=\rho Z+\sqrt{1-\rho^{2}}Z_{i}, \mbox{~$i\in[n]$~}.$$
 Fix $p=0.1$.  In Figure \ref{R1}, we display $R_{n}(p)$ for $n=5,10,15,20$, and $\rho\in[0,1]$. We observe that  sub-uniformity holds for all $\rho\in[0,1]$, and that as $n$ increases, $R_n(p)$ gets larger. These results show that  sub-uniformity may also hold for the class of equicorrelated Gaussian copulas with positive correlations, but the results in this paper can only cover the case of Gaussian copulas with non-positive correlations due to negative upper orthant dependence, and the corresponding sub-uniformity statement for a general  positive $\rho$ is not known in the literature. If the significance level $p$ is extremely small, it is expected that the inflation issue will be less severe, as justified by Theorem 2 of \cite{CLTW21}.
 \begin{figure}[h]
\centering
\includegraphics[height=3.75cm ,trim={0 10 0 10},clip]{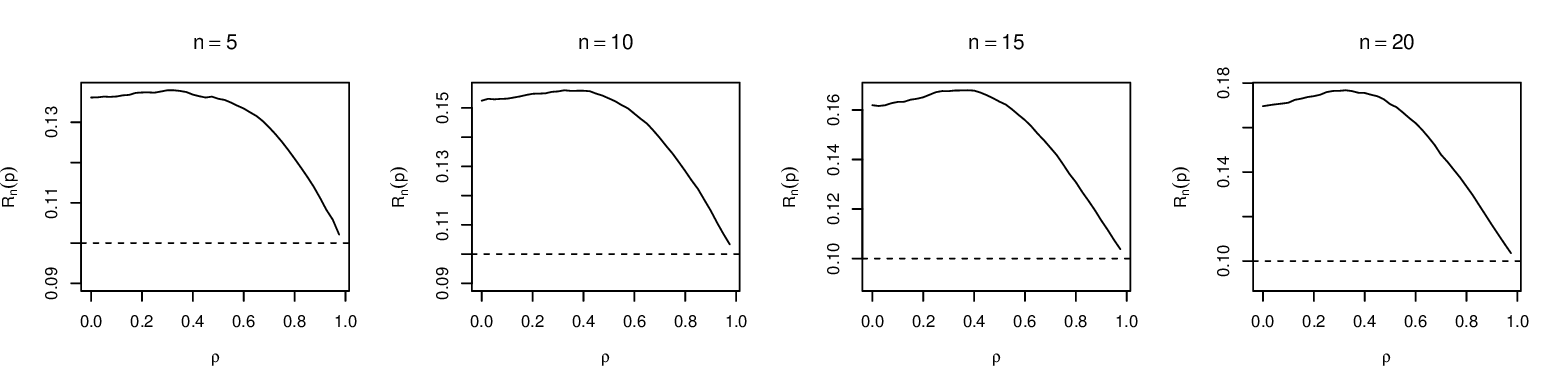}
\caption{Equicorrelated Gaussian copula: $R_{n}(p)$ for $n=5,10,15,20$, and $\rho\in[0,1]$ with $p=0.1$.} 
\label{R1}
\end{figure}
 
In the second example, we consider the case of the Clayton copula. Assume that $p=0.1$ and
$(U_1,\dots,U_n)\overset{\mathrm{d}}{\sim}\mathrm{Clayton}(t)$ where $t\in(0,1.5)$. From Figure \ref{fig:clayton}, we can see that $R_n(p)$ decreases as $t$ increases (i.e., dependence gets stronger). Moreover, for $t\ge 1$, $R_n(p)$ is only slightly larger than $p$, consistent with Theorem \ref{th:Clayton}. On the other hand, as $t$ approaches from 1 to 0,  $R_n(p)$ will increase rapidly as the dependence structure becomes closer to independence. 
 \begin{figure}[h]
\centering
\includegraphics[height=3.75cm ,trim={0 10 0 10},clip]{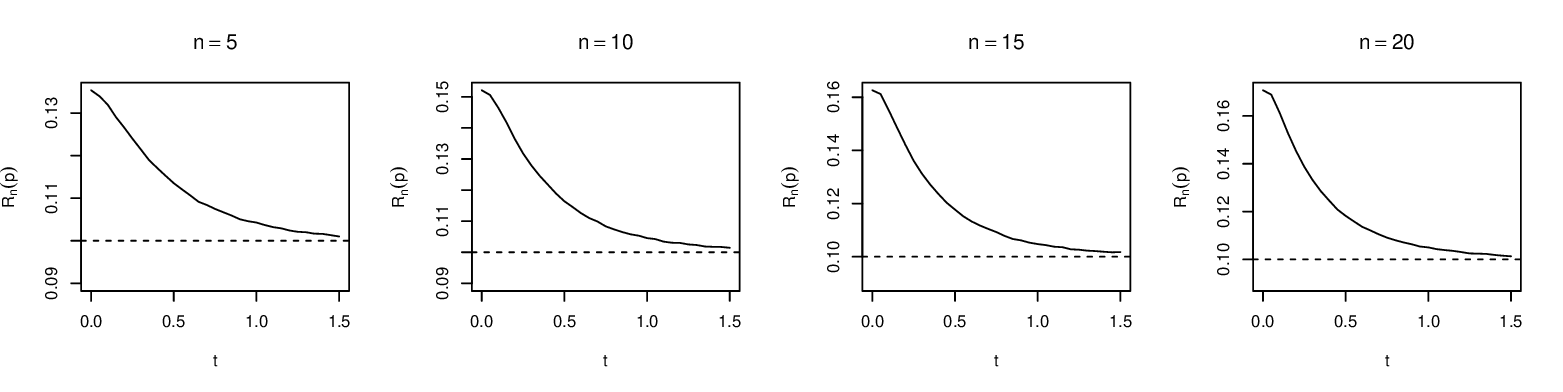}
\caption{Clayton copula: $R_{n}(p)$ for $n=5,10,15,20$, and $t\in[0,1.5]$ with $p=0.1$.} 
\label{fig:clayton}
\end{figure}

 As sub-uniformity holds for some Gaussian copulas and Clayton copulas, a natural question is whether it also holds for the more general classes of elliptical copulas or Archimedean copulas.  A theoretical answer to this question is not available with the current techniques and we present below some simulation studies. We first compute $R_n(0.1)$ by assuming that $(U_1,\dots,U_n)$ follows an equicorrelated  t-copula with degrees of freedom $4$ and correlation coefficient $\rho\in[0,1]$.  Figure \ref{fig:tcopula} suggests that sub-uniformity may hold for t-copulas with positive correlation coefficients and $R_n(p)$ decreases as the positive dependence becomes stronger.
  \begin{figure}[h]
\centering
\includegraphics[height=3.75cm ,trim={0 10 0 10},clip]{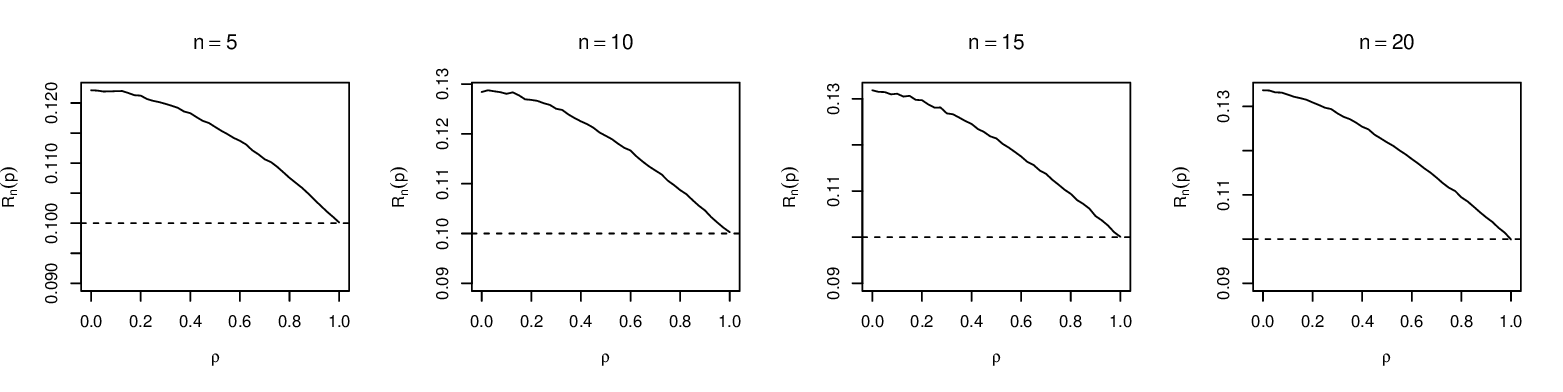}
\caption{Equicorrelated t-copula: $R_{n}(p)$ for $n=5,10,15,20$, and $\rho\in[0,1]$ with $p=0.1$.} 
\label{fig:tcopula}
\end{figure}
Next, we compute $R_n(p)$ for the Gumbel copula, which falls in the class of  Archimedean copulas. The Gumbel copula with parameter $\theta\in(1,\infty)$ is defined as 
$$
C^G(u_1,\dots,u_n) = \exp\(-\((-\log u_1)^\theta+\dots+(-\log u_2)^\theta\)^{1/\theta}\),~~ (u_1,\dots,u_n)\in(0,1)^n.
$$
If $\theta=1$, $C^G$ is the independence copula. As $\theta$ goes to $\infty$, $C^G$ approaches the comonotonicity copula. Figure \ref{fig:gumbel} plots $R_n(0.1)$ against $\theta\in(1,10)$. The observation is similar to the case of the Clayton copula: $R_{n}(p)$ is large when the Gumbel copula is close to the independence copula and is small when the Gumbel copula is close to the comonotonicity copula. Based on these numerical results, we conjecture that sub-uniformity holds for more general elliptical copulas and Archimedean copulas than the ones considered in this paper. Nevertheless, we should keep in mind that sub-uniformity requires $R_n(p)\ge p$ for all $p\in (0,1)$, and not only for a specific  $p$ as in our figures.
 \begin{figure}[h]
\centering
\includegraphics[height=3.75cm ,trim={0 10 0 10},clip]{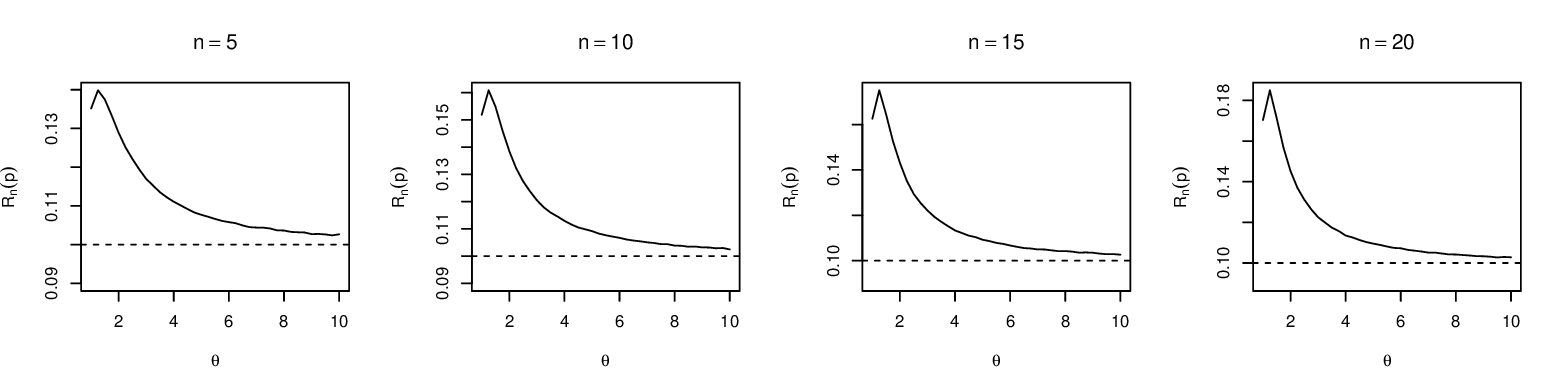}
\caption{Gumbel copula: $R_{n}(p)$ for $n=5,10,15,20$, and $\theta\in[1,10]$ with $p=0.1$.} 
\label{fig:gumbel}
\end{figure}

The fifth example presents the case of independent discrete uniform random variables on $\{1/m,\dots,m/m\}$, $m\ge 2$. Figure \ref{discrete} gives $R_n(p)$ for $10$  discrete uniform random variables with different discretization $m$. We can see that as $m$ increases, $R_n(p)$ for discrete uniform random variables becomes closer to that for uniform random variables. Moreover, if $m$ is large, \eqref{eq:discrete} holds for a wide range of $p$ in $(0,1)$ except for extremely small ones. 

 \begin{figure}[h]
\centering
\includegraphics[height=3.7cm ,trim={0 10 0 10},clip]{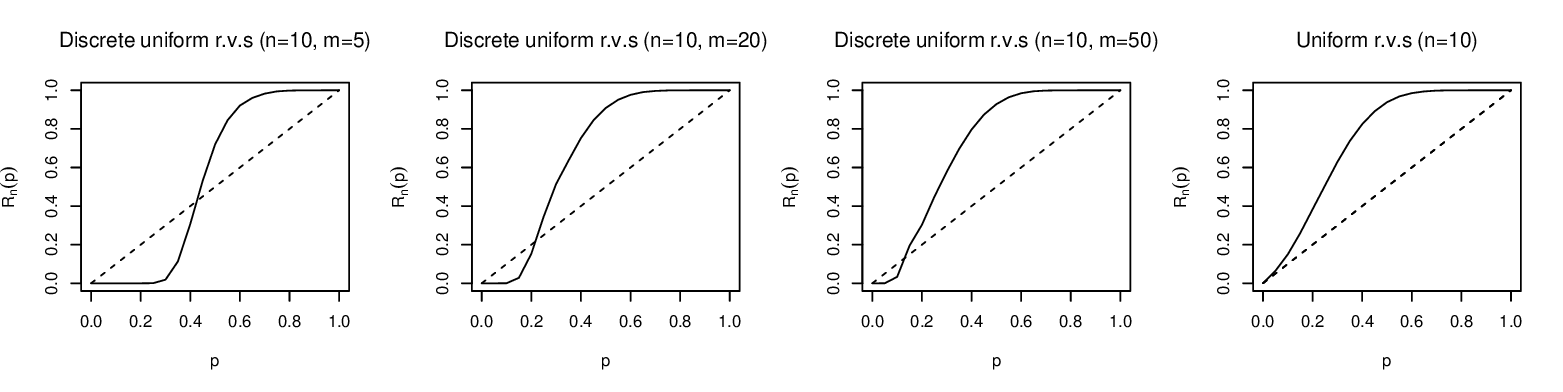}
\caption{$R_{n}(p)$ for discrete p-values ($m=5,20,50$) and uniform p-values, with $p\in(0,1)$.} 
\label{discrete}
\end{figure}

In Figure \ref{threshold}, we numerically compute the threshold $a_{n,p}$ of the harmonic mean p-value for independent p-values and its asymptotic form \eqref{eq:threshold}. The thresholds are computed at significance levels $0.01$, $0.05$, and $0.1$, up to $5000$ p-values. The results suggest that the asymptotic threshold \eqref{eq:threshold} can be a very good approximation of $a_{n,p}$ for large numbers of p-values. 
 As hinted by the plot for $p=0.01$,  the simulated results are not stable for small significance levels, e.g., 0.005, and are thus omitted here. 

 \begin{figure}[h]
\centering
\includegraphics[height=5cm ,trim={0 10 0 10},clip]{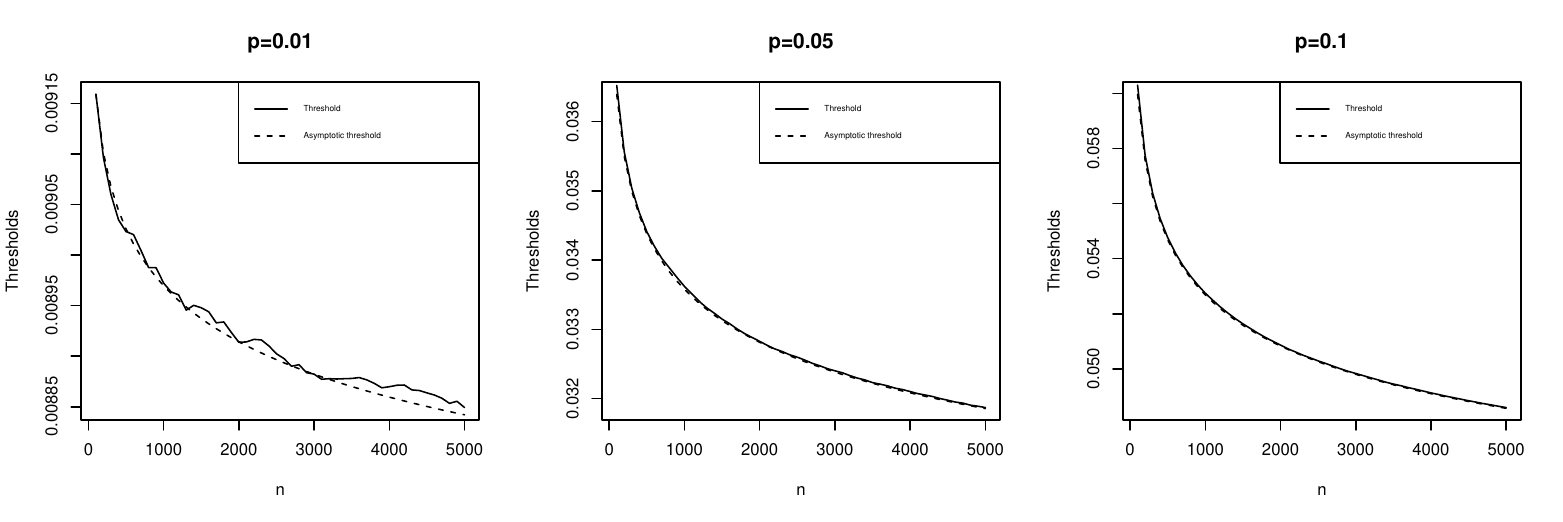}
\caption{The asymptotic threshold \eqref{eq:threshold} and $a_{n,p}$ of the harmonic mean p-value for independent p-values.} 
\label{threshold}
\end{figure}

  \section{Conclusion}\label{sec:7}
Sub-uniformity of generalized means of standard uniform random variables $U_1,\dots,U_n$ is studied under several dependence assumptions. In particular,  sub-uniformity is shown to hold in three cases: (i) negative upper orthant dependence  (Proposition \ref{th:NLD}); (ii) the class of extremal mixture copulas (Theorem \ref{prop:extremal}); (iii) some Clayton copulas (Theorem \ref{th:Clayton}). These dependence structures can be used to construct a wide range of dependence structures for which sub-uniformity holds, as suggested by Propositions \ref{prop:trivial2} and \ref{prop:trivial3}. Based on some numerical results, we conjecture that sub-uniformity also holds for Gaussian copulas with positive correlation coefficients.   An important implication of sub-uniformity in multiple hypothesis testing is that  merging p-values by any $r$-generalized mean function with $r\le -1$ is anti-conservative across all significance levels  in $(0,1) $. Although sub-uniformity  cannot hold for discrete uniform random variables,  using an $r$-generalized mean function with $r\le -1$ can still be anti-conservative if the number of discretizations is large (Theorem \ref{th:discrete}). An asymptotic threshold of the harmonic mean p-value for independent p-values is derived in Proposition \ref{prop:threshold}. As the number of p-values increases, since the asymptotic threshold goes to 0, the harmonic mean p-value will be more anti-conservative if no adjustment is applied.

For the purpose of multiple testing under dependence, due to the anti-conservativeness results found in this paper, we recommend using the Simes method  or the Cauchy combination, which are valid under independence and some other dependence assumptions, as well as their variants, over the harmonic mean p-value. The main advantage of the Simes and the Cauchy combination methods is that no correction is needed to make their merged p-values valid for a wide range of dependence assumptions and different numbers of p-values, thus making these methods robust in some sense. On the other hand, correction of the harmonic mean p-value is necessary and may vary according to different dependence structures and numbers of p-values. 
Theorem \ref{th:Clayton} also gives a small threshold correction for the harmonic mean p-value under  Clayton copulas, suggesting that the harmonic mean p-value may behave quite well under some forms of positive dependence.

 We close the paper by noting that, 
although sub-uniformity can hold under a wide range of dependence structures of $U_1,\dots,U_n$,   there always exists some dependence structure under which sub-uniformity does not hold. For instance, since comonotonicity (i.e., $U_1=\dots=U_n$ almost surely) does not maximize the distribution function of the sum of random variables (see \cite{WPY13} for bounds on the distribution function of the sum), it is always possible to construct a dependence structure of $U_1,\dots,U_n$ such that 
$\p(w_1U_1^{r}+\dots+w_nU_n^{r}\le t) >  \p(U_1^{r}\le t)$ for some threshold $t\in \R$ of interest. Therefore, conditions on dependence structures that lead to sub-uniformity or super-uniformity, other than the ones studied in this paper, require further research. This may be achieved by research on either the statistical problem of merging p-values, 
 the risk management problem of quantifying risk aggregation, or their interplay. 

{
\small

}

\end{document}